\tikzstyle{shaded}=[fill=red!10!blue!20!gray!30!white]
\tikzstyle{shaded line}=[double=red!10!blue!20!gray!30!white, double distance=1.5mm, draw=black]
\tikzstyle{unshaded}=[fill=white]
\tikzstyle{unshaded line}=[double=white, double distance=1.5mm, draw=black]
\tikzstyle{Tbox}=[circle, draw, thick, fill=white, opaque,]
\tikzstyle{empty box}=[circle, draw, thick, fill=white, opaque, inner sep=2mm]
\tikzstyle{background rectangle}= [fill=red!10!blue!20!gray!40!white,rounded corners=2mm] 
\tikzstyle{on}=[very thick, red!50!blue!50!black]
\tikzstyle{off}=[gray]
\tikzstyle{traces}=[scale=.2, inner sep=1mm]
\tikzstyle{quadratic}=[scale=.4, inner sep=1mm, baseline]
\tikzstyle{annular}=[scale=.7, inner sep=1mm, baseline]
\tikzstyle{make triple edge size}= [scale=.4, inner sep=1mm,baseline] 
\tikzstyle{icosahedron network}=[scale=.3, inner sep=1mm, baseline]
\tikzstyle{ATLsix}=[scale=.25, baseline]
\tikzstyle{TL12}=[scale=.15,baseline]
\tikzstyle{PAdefn}=[scale=.7,baseline]
\tikzstyle{TLEG}=[scale=.5,baseline]
\title[Equivalence of Two Approaches to Yang-Mills]{Equivalence of Two Approaches to Yang-Mills on Non-commutative Torus}
\author{Partha Sarathi Chakraborty}
\address{The Institute of Mathematical Sciences, CIT Campus, Taramani, Chennai
600113}
\email{parthac@imsc.res.in}
\author{Satyajit Guin}
\address{The Institute of Mathematical Sciences, CIT Campus, Taramani, Chennai
600113}
\email{gsatyajit@imsc.res.in}
\keywords{Yang-Mills, Noncommutative Torus, Connection, Curvature}
\date{\today}
\subjclass[2000]{Primary 46L87,58B34}
\newtheorem{definition}{Definition}[section]
\newtheorem{theorem}[definition]{Theorem}
\newtheorem{lemma}[definition]{Lemma}
\newtheorem{proposition}[definition]{Proposition}
\newtheorem{remark}[definition]{Remark}
\begin{document}
\begin{abstract}
There are two notions of Yang-Mills action functional in noncommutative geometry. We show that for noncommutative n-torus both these notions agree. We also prove a structure theorem on the 
Hermitian structure of a finitely generated projective modules over spectrally invariant subalgebras of $C^*$-algebras.  
\end{abstract}
\maketitle

\section{Introduction}
There are two approaches to noncommutative geometry due to Alain Connes. In the first approach \cite{1} one begins with $({\mathcal A}, G, \alpha, \tau)$ a $C^*$-dynamical system along with an invariant trace. Moreover one also assumes that the dynamics is governed by a Lie group. In this setting Connes introduces the basic notions like Hermitian vector bundles, connections, curvature etc. and eventually along with
 Rieffel (\cite{3}) he introduces the notion of Yang-Mils action functional as a certain functional $YM(\nabla) $ defined on the space of compatible connections 
$C({\mathcal E})$ on a finitely generated projective ${\mathcal A}$ module $\mathcal E$ with a Hermitian structure. Critical points of this functional has been studied by Rieffel in (\cite{3.1}). Variations of this concept have been studied in \cite{6}. Later (\cite{2}) Connes gave a spectral formulation of noncommutative geometry. In this formulation a noncommutative geometric space is
 described by a certain triple called spectral triple. This formulation is more popular today. In this setting as well he introduced the concept of compatible connections $\tilde{C}({\mathcal E})$ and Yang-Mills action functional. There is a general recipe to produce a ``spectral triple$\textquotedblright$ from a $C^*$-dynamical system. Here we have put spectral triple with in quotation because the general recipe does not tell you that the resulting object is a true spectral triple but they are candidates and one has to verify the relevant conditions on a case by case basis. However for noncommutative torus, the prime test case in the subject it is easy to see that indeed one obtains a genuine spectral triple.  Then one encounters the natural question whether these two notions agree. Connes addressed this for noncommutative two torus. Proposition 13, in the last chapter of \cite{2} states that the notions of compatible connections is same in both the approaches and the concepts of Yang-Mills action functionals also agree up to a normalizing factor. In this paper we take up the case of higher dimensional noncommutative torus and show that even in these cases the notions of compatible connections are same in the sense that there is an affine isomorphism between the spaces $C({\mathcal E})$ and $\tilde{C}({\mathcal E})$ and Yang-Mills action functionals also agree up to a normalizing factor. Along the way we also prove a structural result on finitely generated projective modules with Hermitian structure over spectrally invariant subalgebras of $C^*$-algebras. The result is the following. If $\mathcal A$ is a spectrally invariant subalgebra of a $C^*$-algebra $A$, that is ${\mathcal A} \subseteq A$ is a $*$-subalgebra  closed under holomorphic function calculus and $\mathcal E$ is a finitely generated projective $\mathcal A$ module with a Hermitian structure then there is a self adjoint projection $p \in M_n({\mathcal A})$ such that ${\mathcal E} \cong p {\mathcal A}^n$ and $\mathcal E$ has the induced Hermitian structure. 
  Our proof makes crucial use of the hypothesis that ${\mathcal A}$ is closed under holomorphic function calculus. We do not have any counter example but we believe it is necessary to assume that $\mathcal A$ is spectrally invariant.

Organization of the paper is as follows. In section two we recall the definition of Yang-Mills action functional in the dynamical system approach and work out the expression for the noncommutative n-tori. Section three is devoted to the description of Yang-Mills in the frame work of spectral triples. We also show that finitely generated projective modules with a Hermitian structure over a spectrally invariant subalgebra of a $C^*$-algebra is always isomorphic with a submodule of a free module with induced Hermitian structure. Finally in the fourth section we explicitly work out the Yang-Mills functional for the noncommutative torus and show that it agrees with the first approach. This result is an higher dimensional analog of the corresponding result of Connes.
\section{First approach to Yang-Mills functional}

We  briefly recall the setting of (\cite{3}) for Yang-Mills functional on a $C^*$-dynamical system with an invariant, faithful trace. 
Let $(\mathcal{A},G,\alpha,\tau)$ be one such, where $\mathcal{A}$ is a unital $C^*$-alegebra, $G$ is a connected Lie group, $\alpha : G \longrightarrow Aut(\mathcal{A})$, a homomorphism such that for all $a \in {\mathcal A}$,
the mapping $g$ going to $\alpha_g(a)$ is continuous and $\tau$ is a $G$-invariant, faithful trace on $\mathcal A$. We say that $a \in \mathcal{A}$ is smooth iff the map $g \mapsto \alpha_g (a)$ from $G$ to the normed space
$\mathcal{A}$ is smooth. The involutive algebra $\mathcal{A}^{\infty} = \{ a \in \mathcal{A} :\, a \ \hbox{is smooth}  \}$ is a norm dense subalgebra of $\mathcal{A}$, called the  smooth subalgebra.
Note that this is unital as well. One crucial property enjoyed by this algebra is that it is closed under the holomorphic function calculus inherited from the ambient $C^*$-algebra $\mathcal A$.

Let $\mathcal{E}$ be a finitely generated projective module over $\mathcal{A}\,$. Unless otherwise stated we will only consider right modules. We will say f.g.p module to mean  finitely generated projective module.
There exists a f.g.p $\mathcal{A}^{\infty}$-module $\mathcal{E}^{\infty}$, unique upto isomorphism, such that $\mathcal{E}$ is isomorphic to $\mathcal{E}^{\infty} \otimes _{\mathcal{A}^{\infty}} \mathcal{A}$.
Conversely if $\mathcal{E}^{\infty}$ is a f.g.p $\mathcal{A}^{\infty}$-module then  $\mathcal{E}^{\infty} \otimes_{\mathcal{A}^{\infty}} \mathcal{A}$  becomes a f.g.p module over $\mathcal{A}$. 
Since we shall never work with $\mathcal{A}$ and $\mathcal{E}$ but only with $\mathcal{A}^{\infty}$ and $\mathcal{E}^{\infty}$,  from now on, for notational simplicity, we denote the latter by $\mathcal{A}$ and $\mathcal{E}$.
Define $\mathcal{E}^*$ as the space of $\mathcal{A}$ linear mappings from $\mathcal E$ to $\mathcal A$. Clearly $\mathcal{E}^*$ is a right $\mathcal A$ module.
\begin{definition}\label{def}
A {\it Hermitian} structure on $\mathcal{E}$ is an $\mathcal{A}$-valued positive-definite sesquilinear mapping 
$\langle \, \, , \, \rangle_{\mathcal{A}} $ such that, 
\begin{enumerate}
\item [(a)] $\langle \xi , \xi' \rangle_{\mathcal{A}}^* = \langle \xi' , \xi \rangle_{\mathcal{A}}\, , \, \, \, \forall \, \xi , \xi' \in \mathcal{E}$.
\item [(b)] $\langle \xi , \xi'. a \rangle_{\mathcal{A}} =  (\langle \xi , \xi' \rangle_{\mathcal{A}}) .a\, , \, \, \, \forall \, \xi , \xi' \in \mathcal{E},\, \, \forall \, a \in \mathcal{A}$.
\item [(c)] The map $\xi \longmapsto \Phi_\xi$ from $\mathcal{E}$ to $\mathcal{E}^*\,$, given by $\Phi_\xi(\eta) = \langle\xi,\eta\rangle_\mathcal{A}\, , \, \forall \eta \in \mathcal{E}\,$, gives an $\mathcal{A}$-module isomorphism
between $\mathcal{E}$ and $\mathcal{E}^*$. This property will be referred as the self-duality of $\mathcal E$.
\end{enumerate}
\end{definition}
Any free $\mathcal{A}$-module $\mathcal{E}_0=\mathcal{A}^q$ has a  {\it Hermitian} structure, given by 
$\langle\, \xi , \eta\,\rangle_\mathcal{A} = \sum_{j=1}^q \xi_j^* \eta_j , \forall\, \xi = (\xi_1, \ldots, \xi_q) \, , \, \eta = (\eta_1, \ldots, \eta_q) \in \mathcal{E}_0$. We refer this as the canonical  {\it Hermitian} structure
on $\mathcal{A}^q$. The following lemma shows that every f.g.p module admits a {\it Hermitian} structure.
\begin{lemma}\label{Herm-I}
(a) A f.g.p module of the form $p\mathcal{A}^q$, where $ p \in \mathcal{A} \otimes M_q(\mathbb {C})$ a projection, has a {\it Hermitian} structure.

(b) Every finitely generated projective module $\mathcal{E}$ over $\mathcal{A}$ is isomorphic as a f.g.p module with  $p\mathcal{A}^q$ where $p$ is a self-adjoint idempotent, that is a projection.
Hence $\mathcal{E}$ has a {\it Hermitian} structure on it.
\end{lemma}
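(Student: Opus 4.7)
For part (a), equip $p\mathcal{A}^q$ with the restriction of the canonical Hermitian form on $\mathcal{A}^q$, namely $\langle\xi,\eta\rangle_{\mathcal{A}} := \sum_{j=1}^q \xi_j^*\eta_j$ for $\xi, \eta \in p\mathcal{A}^q$. Axioms~(a) and~(b) of Definition~\ref{def} and positive-definiteness are inherited verbatim from the free module, so the only nontrivial point is self-duality~(c). Given $\phi \in (p\mathcal{A}^q)^*$, I would extend it to $\tilde{\phi} \in (\mathcal{A}^q)^*$ by $\tilde{\phi}(\eta) := \phi(p\eta)$. Since $(\mathcal{A}^q)^* \cong \mathcal{A}^q$ under the obvious pairing (a homomorphism $\psi:\mathcal{A}^q\to\mathcal{A}$ is determined by the tuple $(\psi(e_1)^*,\ldots,\psi(e_q)^*)$), we have $\tilde{\phi} = \Phi_\xi$ for a unique $\xi \in \mathcal{A}^q$. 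Then, using $p^* = p$ and $p\eta = \eta$ for $\eta \in p\mathcal{A}^q$, we obtain $\phi(\eta) = \tilde{\phi}(\eta) = \langle \xi, p\eta\rangle = \langle p\xi, \eta\rangle$, so $\phi = \Phi_{p\xi}$ with $p\xi \in p\mathcal{A}^q$. Injectivity of $\xi \mapsto \Phi_\xi$ is immediate from positive-definiteness evaluated at $\eta = \xi$.

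For part (b), I would begin with the standard fact that $\mathcal{E} \cong e\mathcal{A}^q$ for some idempotent $e \in M_q(\mathcal{A})$, and then produce a self-adjoint projection $p \in M_q(\mathcal{A})$ with $pe = e$ and $ep = p$. Once such a $p$ is in hand, the elementary observation that for any idempotent $f$ in a unital ring one has $f\mathcal{A}^q = \{\xi : f\xi = \xi\}$, combined with $pe = e$ and $ep = p$, forces $e\mathcal{A}^q = p\mathcal{A}^q$ as submodules of $\mathcal{A}^q$, and part~(a) then delivers the Hermitian structure. To build $p$, set
\[ z \;:=\; 1 + (e^*-e)(e-e^*) \;=\; 1 - e - e^* + ee^* + e^*e \;\in\; M_q(\mathcal{A}). \]
Writing $b := e - e^*$, we have $z = 1 + b^*b$, which is self-adjoint with spectrum in $[1,\infty)$ inside $M_q(A)$, hence invertible there. \emph{This is the one step using the hypothesis}: since $\mathcal{A} \subseteq A$ is closed under holomorphic functional calculus, so is $M_q(\mathcal{A}) \subseteq M_q(A)$, and applying the calculus to $t \mapsto 1/t$ on a neighbourhood of the spectrum of $z$ yields $z^{-1} \in M_q(\mathcal{A})$. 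A short computation using $e^2 = e$ gives $ez = ze = ee^*e$ (and the adjoint relation $e^*z = ze^* = e^*ee^*$), so $z^{-1}$ commutes with both $e$ and $e^*$. Setting $p := ee^*z^{-1} \in M_q(\mathcal{A})$, routine verification then yields $p = p^*$, $p^2 = p$, $pe = e$, and $ep = p$.

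The sole substantive obstacle is ensuring that $z^{-1}$ lies in $M_q(\mathcal{A})$ rather than merely in $M_q(A)$; once holomorphic functional calculus is invoked, the remainder is algebraic bookkeeping with idempotents in a unital $*$-algebra.
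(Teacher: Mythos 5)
Your part (a) is essentially the paper's own argument: restrict the canonical form on $\mathcal{A}^q$, extend a functional on $p\mathcal{A}^q$ to $\mathcal{A}^q$ by precomposing with $\eta\mapsto p\eta$, represent it by some $\xi\in\mathcal{A}^q$, and use $p^*=p$ to land back in $p\mathcal{A}^q$; nothing to add there.

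For part (b) you take a genuinely different, and in one respect cleaner, route. The paper starts from the same idempotent $e$ with $\mathcal{E}=e\mathcal{A}^q$, but then invokes the Wegge-Olsen similarity: it forms $z=((2e^*-1)(2e-1)+1)^{1/2}$ (a square root, hence already an application of holomorphic functional calculus to keep $z$ and $z^{-1}$ in $M_q(\mathcal{A})$), sets $\tilde p=zez^{-1}$, and transports the structure along the isomorphism $e\mathcal{A}^q\cong\tilde p\mathcal{A}^q$. You instead use the Kaplansky--Blackadar formula $p=ee^*\bigl(1+(e^*-e)(e-e^*)\bigr)^{-1}$, which produces a projection with $pe=e$ and $ep=p$, so that $e\mathcal{A}^q$ and $p\mathcal{A}^q$ are \emph{literally equal} as submodules of $\mathcal{A}^q$ rather than merely isomorphic via conjugation; your algebra ($ez=ze=ee^*e$, commutation of $z^{-1}$ with $e$ and $e^*$, and the verifications $p=p^*$, $p^2=p$) all checks out. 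Both proofs use the spectral-invariance hypothesis at exactly one point --- yours to keep $z^{-1}$ in $M_q(\mathcal{A})$, the paper's to keep the square root and its inverse there --- and both correctly reduce the existence of the Hermitian structure to part (a). What your version buys is the avoidance of the square root and of the conjugation step; what the paper's version buys is a direct citation to a standard reference. Either way the lemma is established, so I see no gap.
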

\begin{proof}(a) With respect to the canonical Hermitian structure $\langle \, p^*\xi,\eta\, \rangle_\mathcal{A} = \langle\, \xi,p\eta\, \rangle_\mathcal{A}$ holds for any $p \in M_q(\mathcal{A})$.
Suppose $\mathcal{E}=p\mathcal{A}^q$ be a f.g.p module with $p$ a projection in $M_q(\mathcal{A})$. 
The canonical structure $\langle \, \xi,\eta\, \rangle_{\mathcal{A}}=\sum \xi_j^*\eta_j$ on $\mathcal{A}^q$ will induce a pairing on $\mathcal{E}$. 
We have to show that $\xi \longmapsto \Phi_{\xi}$ gives an
$\mathcal{A}$-module isomorphism between $\mathcal{E}$ and $\mathcal{E}^*$. It is enough to check only the surjectivity of this map. In order to do so let's take an element $f \in \mathcal{E}^*$.
Then $\tilde f = f \circ \pi$ is an element of $(\mathcal{A}^q)^*$ where $\pi: \mathcal{A}^q \longrightarrow p\mathcal{A}^q$ is the map $\xi \longmapsto p\xi$. By definition (\ref{def}) there exists $\eta
\in \mathcal{A}^q$ s.t $\tilde f=\Phi_\eta$. Consider any element $p\xi \in \mathcal{E}$ with $\xi \in \mathcal{A}^q$. Then,
\begin{eqnarray*}
f(p\xi) = f \circ \pi(p\xi) = \tilde f(p\xi) = \langle \, \eta,p\xi\, \rangle_\mathcal{A}
                                             = \langle \, p^*\eta,p\xi\, \rangle_\mathcal{A}
                                             = \langle \, p\eta,p\xi\, \rangle_\mathcal{A}
                                             = \Phi_{p\eta}(p\xi).
\end{eqnarray*}
Hence $ f= \Phi_{p\eta}$ with $\eta \in \mathcal{A}^q$. So the induced pairing on $\mathcal{E}$ gives a {\it Hermitian} structure on it.

(b) Let $\mathcal{E}$ be a f.g.p module over $\mathcal{A}\,$. There exists an $\mathcal{A}$-module $\mathcal{F}$ such that $\mathcal{E}\bigoplus \mathcal{F} \cong \mathcal{A}^q$ for some natural number $q$. 
Once we fix such an $\mathcal{F}$ we let $p : \mathcal{A}^q \longrightarrow \mathcal{A}^q$ given by $p(e+f) = e$ for $e\in \mathcal{E}$ and $f\in \mathcal{F}$. So $p$ is an idempotent in $M_q(\mathcal{A})$ 
with  $\mathcal{E} = p\mathcal{A}^q$. By (\cite{10},page $101$) we see that in a $C^*$-algebra $($or $*$-subalgebra of a $C^*$-algebra which is stable under holomorphic function calculus$)$ every idempotent 
is similar to a selfadjoint idempotent i.e., a projection and this similarity is witnessed by the invertible element $z={((2p^*-1)(2p-1)+1)}^{1/2}$. Since $\mathcal A$ is closed under holomorphic function calculus 
the invertible element $z$ actually belongs to $M_q(\mathcal{A})$. Hence $\tilde{p}  = z pz^{-1}$ is a projection in $M_q(\mathcal{A})$ and $\widetilde{\mathcal{E}} = \tilde {p}\mathcal{A}^q \cong  
p\mathcal{A}^q=\mathcal{E}$. Then one restricts the {\it Hermitian} structure on $\mathcal{A}^q$ to $\widetilde{\mathcal{E}}$ and endows $\mathcal{E}$ with the {\it Hermitian} structure obtained 
via the isomorphism between $\mathcal{E}$ and $\widetilde{\mathcal{E}}$. 
\end{proof}
\begin{remark} \rm
The concept of {\it Hermitian} structure can be defined for f.g.p modules over involutive algebras and part (a) of Lemma~(\,\ref{Herm-I}\,) still holds. But part (b) requires the more finer property of 
closure under holomorphic function calculus.
\end{remark}
\begin{remark}[{\bf  Open Question}] \rm We do not know whether a finitely generated projective module over an involutive algebra always admits Hermitian structure.
\end{remark}
Let $Lie(G)$ be the Lie algebra of $G$. Then we have a representation $\delta$ of $Lie(G)$ into the Lie algebra $Der(\mathcal{A})$ of derivations on $\mathcal{A}$ given by
\begin{eqnarray}
\delta_X(a)=\frac{d}{dt}{|}_{t=0}\, \alpha_{exp(tX)}(a).
\end{eqnarray}
where $exp: Lie(G) \rightarrow G$ is the exponential map.
\begin{definition}
Let $\mathcal{E}$ be a f.g.p module over $\mathcal{A}$ with a Hermitian Structure.
A connection (on $\mathcal{E}$) is a $\mathbb{C}$-linear map 
$\nabla : \mathcal{E} \longrightarrow \mathcal{E} \otimes Lie(G)^*$ such that,
for all $X \in Lie(G)$ and $\xi \in \mathcal{E}$, $a \in 
\mathcal{A}$ one has
\begin{eqnarray}
\nabla_X (\xi \cdot a) = \nabla_X (\xi) \cdot a + \xi \cdot \delta_X (a)  .
\end{eqnarray}
\end{definition}
We shall say that $\nabla$ is compatible with respect to the {\it Hermitian} structure on $\mathcal{E}$ 
iff~:
\begin{eqnarray*}
\langle \nabla_X \, \xi\, , \xi' \rangle_{\mathcal{A}} \, + \, \langle \xi\, , \nabla_X \, \xi' 
\rangle_{\mathcal{A}} = \delta_X (\langle\, \xi , \xi'\, \rangle_{\mathcal{A}}) \, , \quad \forall \, \xi\, , 
\xi' \in \mathcal{E},\, \, \forall \, X \in Lie(G) \, .
\end{eqnarray*} 
As discussed in (\cite{1}) compatible connection always exists. We will denote the set of compatible connections on $\mathcal{E}$ by C($\mathcal{E}$).
The algebra $End(\mathcal{E})$ has a natural involution given by$\,$,
\begin{eqnarray*}
\langle\, T^*\xi,\eta\, \rangle_\mathcal{A} = \langle\, \xi,T\eta\, \rangle_\mathcal{A}\, \, \, \, \, \, \, \forall\, \xi,\eta \in \mathcal{E}\, , \,T \in End(\mathcal{E}).
\end{eqnarray*}
For any two compatible connections $\nabla , \nabla^\prime$ it can be easily checked that $\nabla_X - \nabla^\prime_X$ is a skew-adjoint element of $End(\mathcal{E})$ for each $X \in Lie(G)$. The  curvature  $\varTheta_\nabla $ of a 
connection $\nabla$ is the alternating bilinear $End(\mathcal{E})$-valued form on $Lie(G)$ defined by,
\begin{eqnarray*}
\varTheta_\nabla : \wedge^2(Lie(G)) \longrightarrow End(\mathcal{E})
\end{eqnarray*}
\begin{eqnarray*}
\varTheta_\nabla (X \wedge Y) = [\nabla_X , \nabla_Y] - \nabla_{[X,Y]}\, , \, \, \forall\, X,Y \in  Lie(G).
\end{eqnarray*}
This measures the extent to which $\nabla$ fails to be a Lie algebra homomorphism. A simple calculation will assure that $\varTheta_\nabla$ takes value in $End(\mathcal{E})$. Infact more can be said about the range of $\varTheta_\nabla$.
We define $End(\mathcal{E})_{skew} = \{T \in End(\mathcal{E}) : T^* = -T \}$, the subset of skew-adjoint elements of $End(\mathcal{E})$.
\begin{lemma}\label{skew-adjoint}
Range of $\varTheta_\nabla$ is contained in $End(\mathcal{E})_{skew}$.
\end{lemma}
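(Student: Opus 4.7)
The plan is to show directly that $\langle \varTheta_\nabla(X\wedge Y)\,\xi,\eta\rangle_{\mathcal{A}} + \langle \xi,\varTheta_\nabla(X\wedge Y)\,\eta\rangle_{\mathcal{A}} = 0$ for all $\xi,\eta\in\mathcal{E}$ and $X,Y\in Lie(G)$, since by the definition of the involution on $End(\mathcal{E})$ this is equivalent to $\varTheta_\nabla(X\wedge Y)^* = -\varTheta_\nabla(X\wedge Y)$.

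First I would apply the compatibility identity to $\delta_X(\langle \nabla_Y \xi,\eta\rangle_{\mathcal{A}})$ and to $\delta_X(\langle \xi,\nabla_Y\eta\rangle_{\mathcal{A}})$ separately, and add them to obtain an expansion of $\delta_X\delta_Y(\langle \xi,\eta\rangle_{\mathcal{A}})$ as the sum
\[
\langle \nabla_X\nabla_Y\xi,\eta\rangle_{\mathcal{A}} + \langle \nabla_Y\xi,\nabla_X\eta\rangle_{\mathcal{A}} + \langle \nabla_X\xi,\nabla_Y\eta\rangle_{\mathcal{A}} + \langle \xi,\nabla_X\nabla_Y\eta\rangle_{\mathcal{A}}.
\]
Swapping the roles of $X$ and $Y$ gives the analogous expansion of $\delta_Y\delta_X(\langle \xi,\eta\rangle_{\mathcal{A}})$. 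The two mixed ``cross'' terms are manifestly symmetric in $X,Y$ and cancel when we subtract, leaving
\[
[\delta_X,\delta_Y](\langle \xi,\eta\rangle_{\mathcal{A}}) = \langle [\nabla_X,\nabla_Y]\xi,\eta\rangle_{\mathcal{A}} + \langle \xi,[\nabla_X,\nabla_Y]\eta\rangle_{\mathcal{A}}.
\]

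Next I would invoke the fact that $\delta:Lie(G)\to Der(\mathcal{A})$ is a Lie algebra homomorphism, so the left-hand side equals $\delta_{[X,Y]}(\langle \xi,\eta\rangle_{\mathcal{A}})$, which in turn equals $\langle \nabla_{[X,Y]}\xi,\eta\rangle_{\mathcal{A}} + \langle \xi,\nabla_{[X,Y]}\eta\rangle_{\mathcal{A}}$ by one more use of compatibility. Subtracting these two expressions yields exactly
\[
\langle ([\nabla_X,\nabla_Y]-\nabla_{[X,Y]})\xi,\eta\rangle_{\mathcal{A}} + \langle \xi,([\nabla_X,\nabla_Y]-\nabla_{[X,Y]})\eta\rangle_{\mathcal{A}} = 0,
\]
which is the skew-adjointness of $\varTheta_\nabla(X\wedge Y)$.

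There is essentially no obstacle here; the only thing one must be careful about is the sesquilinearity convention (the $\mathcal{A}$-valued pairing is linear in the second slot, and $\delta_X$ commutes with the $*$-operation because $\alpha_g$ is a $*$-automorphism), so that the two applications of compatibility combine additively rather than cancelling. Once the bookkeeping of the four second-order terms is done correctly, the symmetric cross terms cancel and the statement drops out.
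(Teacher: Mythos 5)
Your proof is correct and uses exactly the same ingredients as the paper's: repeated application of the compatibility identity together with the fact that $\delta$ is a Lie algebra homomorphism, so that $[\delta_X,\delta_Y]=\delta_{[X,Y]}$ kills the second-order derivation terms. The only difference is organizational — you symmetrize in $X,Y$ and subtract so the cross terms cancel visibly, whereas the paper runs one long chain of equalities from $\langle\varTheta_\nabla(X\wedge Y)\xi,\eta\rangle_{\mathcal{A}}$ to $-\langle\xi,\varTheta_\nabla(X\wedge Y)\eta\rangle_{\mathcal{A}}$ — so this is essentially the same argument, cleanly presented.
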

\begin{proof}
We have to show that $\langle\,\varTheta_\nabla(X\wedge Y)(\xi),\eta\,\rangle_\mathcal{A} = - \langle\,\xi,\varTheta_\nabla(X\wedge Y)(\eta)\,\rangle_\mathcal{A}\, \, $ for all $\xi,\eta \in \mathcal{E}$.
\begin{eqnarray*}
\langle\,\varTheta_\nabla(X\wedge Y)(\xi),\eta\,\rangle_\mathcal{A} & = & \langle\,([\nabla_X,\nabla_Y]-\nabla_{[X,Y]})(\xi),\eta\,\rangle_\mathcal{A}\\
                                                                    & = & \langle\,\nabla_X(\nabla_Y(\xi))-\nabla_Y(\nabla_X(\xi)) - \nabla_{[X,Y]}(\xi),\eta\,\rangle_\mathcal{A}\\
                                                                    & = & \langle\,\nabla_X(\nabla_Y(\xi)),\eta\,\rangle_\mathcal{A} - \langle\,\nabla_Y(\nabla_X(\xi)),\eta\,\rangle_\mathcal{A} - \langle\,\nabla_{[X,Y]}(\xi)),\eta\,\rangle_\mathcal{A}\\
                                                                    & = & \delta_X(\langle\,\nabla_Y(\xi),\eta\,\rangle_\mathcal{A}) - \langle\,\nabla_Y(\xi),\nabla_X(\eta)\,\rangle_\mathcal{A} - \delta_Y(\langle\,\nabla_X(\xi),\eta\,\rangle_\mathcal{A})\\
                                                                    &   & +\, \langle\,\nabla_X(\xi),\nabla_Y(\eta)\,\rangle_\mathcal{A} - \delta_{[X,Y]}(\langle\xi,\eta\rangle_\mathcal{A}) + \langle\,\xi,\nabla_{[X,Y]}(\eta)\,\rangle_\mathcal{A}\\
                                                                    & = & \delta_X(\delta_Y(\langle\xi,\eta\rangle_\mathcal{A})-\langle\xi,\nabla_Y(\eta)\rangle_\mathcal{A}) - \langle\nabla_Y(\xi),\nabla_X(\eta)\rangle_\mathcal{A})\\
                                                                    &   & -\, \delta_Y(\delta_X(\langle\xi,\eta\rangle_\mathcal{A})-\langle\xi,\nabla_X(\eta)\rangle_\mathcal{A}) + \langle\nabla_X(\xi),\nabla_Y(\eta)\rangle_\mathcal{A})\\
                                                                    &   & -\, \delta_{[X,Y]}(\langle\xi,\eta\rangle_\mathcal{A}) + \langle\xi,\nabla_{[X,Y]}(\eta)\rangle_\mathcal{A}\\
                                                                    & = & [\delta_X,\delta_Y](\langle\xi,\eta\rangle_\mathcal{A}) - \delta_{[X,Y]}(\langle\xi,\eta\rangle_\mathcal{A}) + \langle\xi,\nabla_{[X,Y]}(\eta)\rangle_\mathcal{A}\\
                                                                    &   & +\, \langle\nabla_X(\xi),\nabla_Y(\eta)\rangle_\mathcal{A} + \delta_Y(\langle\xi,\nabla_X(\eta)\rangle_\mathcal{A})\\
                                                                    &   & -\, \langle\nabla_Y(\xi),\nabla_X(\eta)\rangle_\mathcal{A} - \delta_X(\langle\xi,\nabla_Y(\eta)\rangle_\mathcal{A})\\
                                                                    & = & \langle\xi,\nabla_{[X,Y]}(\eta)\rangle_\mathcal{A} - \langle\xi,\nabla_X\nabla_Y(\eta)\rangle_\mathcal{A} + \langle\xi,\nabla_Y\nabla_X(\eta)\rangle_\mathcal{A}\\
                                                                    & = & \langle\xi,\nabla_{[X,Y]}(\eta)\rangle_\mathcal{A} - \langle\xi,[\nabla_X,\nabla_Y](\eta)\rangle_\mathcal{A}\\
                                                                    & = & -\, \langle\xi,([\nabla_X,\nabla_Y]-\nabla_{[X,Y]})(\eta)\rangle_\mathcal{A}\\
                                                                    & = & -\, \langle\,\xi,\varTheta_\nabla(X\wedge Y)(\eta)\,\rangle_\mathcal{A}\, \, \, .
\end{eqnarray*}
\end{proof}
We fix an inner product on $Lie(G)$ and this will remain fixed throughout. We next choose an orthonormal basis $\{Z_1,\ldots,Z_n\}\,$ of $Lie(G)$. The bilinear form on the space of alternating $2$-forms
with values in $End(\mathcal{E})$ is given by, 
\begin{eqnarray*}
\{\Phi,\Psi\}_\mathcal{E} = \displaystyle\sum_{i<j} \, \Phi(Z_i\wedge Z_j)\Psi(Z_i\wedge Z_j).
\end{eqnarray*}
Recall that we have a $G$-invariant faithful trace $\tau$ on $\mathcal{A}$. We can extend it to a canonical faithful trace $\widetilde \tau$ on $End(\mathcal{E})$ with the help of the following lemma.
\begin{lemma}
If $\,\mathcal{E}\,$ is f.g.p $\mathcal{A}$-module with a Hermitian structure, then every element of $End(\mathcal{E})$ can be written as a linear combination of elements of the form
$\langle\, \xi , \eta \, \rangle_\mathcal{E}$ for $\, \xi , \eta \in \mathcal{E}$, where $\langle\, \xi , \eta \, \rangle_\mathcal{E} (\zeta) = \xi \langle\, \eta , \zeta\, \rangle_\mathcal{A}, \, \,
\forall \, \zeta \in \mathcal{E}$.
\end{lemma}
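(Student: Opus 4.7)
My plan is to exploit the concrete picture afforded by Lemma~\ref{Herm-I}, which lets me identify $\mathcal{E}$ with $p\mathcal{A}^q$ (with the Hermitian structure inherited from the canonical one on $\mathcal{A}^q$) for some projection $p \in M_q(\mathcal{A})$. In this model I have a distinguished generating set, namely $f_i := p e_i$ where $\{e_1,\ldots,e_q\}$ is the standard basis of $\mathcal{A}^q$, and I will show that the identity operator of $\mathcal{E}$ admits an explicit decomposition as a sum of ``rank-one'' operators $\langle f_i , f_i\rangle_\mathcal{E}$. The whole lemma will then drop out from a single observation about how $\text{End}(\mathcal{E})$ acts on such rank-one operators.

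The first key identity I would record is the associativity
\begin{eqnarray*}
T \circ \langle \xi , \eta \rangle_\mathcal{E} \;=\; \langle T\xi , \eta \rangle_\mathcal{E}\, , \quad T \in \text{End}(\mathcal{E}),\ \xi,\eta \in \mathcal{E},
\end{eqnarray*}
which is immediate from the defining formula $\langle \xi , \eta \rangle_\mathcal{E}(\zeta) = \xi \langle \eta , \zeta \rangle_\mathcal{A}$ together with the $\mathcal{A}$-linearity of $T$. So if I can only write $\text{Id}_\mathcal{E}$ as a finite sum $\sum_i \langle \xi_i , \eta_i \rangle_\mathcal{E}$, then applying $T$ on the left yields $T = \sum_i \langle T\xi_i , \eta_i \rangle_\mathcal{E}$, which is exactly the desired linear combination.

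It remains to produce such an expansion of the identity. Working in $p\mathcal{A}^q$, for any $\zeta \in \mathcal{E}$ one has $\zeta = p\zeta$, and using $\langle p\alpha,\beta\rangle_\mathcal{A} = \langle\alpha,p\beta\rangle_\mathcal{A}$ (from part (a) of Lemma~\ref{Herm-I}) I compute
\begin{eqnarray*}
\sum_{i=1}^{q} \langle f_i , f_i \rangle_\mathcal{E}(\zeta) \;=\; \sum_{i=1}^{q} f_i \langle f_i , \zeta \rangle_\mathcal{A} \;=\; \sum_{i=1}^{q} p e_i \langle e_i , p\zeta \rangle_\mathcal{A} \;=\; p\Bigl(\sum_{i=1}^{q} e_i \zeta_i\Bigr) \;=\; p\zeta \;=\; \zeta,
\end{eqnarray*}
so $\text{Id}_\mathcal{E} = \sum_i \langle f_i , f_i \rangle_\mathcal{E}$, and applying the associativity identity above with this decomposition completes the proof.

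Honestly there is no serious obstacle here; the only subtlety is making sure the Hermitian structure on $\mathcal{E}$ is indeed the one induced from $\mathcal{A}^q$ via the projection, which is precisely the content of Lemma~\ref{Herm-I}(b), and that the isomorphism $\mathcal{E} \cong p\mathcal{A}^q$ is Hermitian so that the formula for $\langle \cdot,\cdot\rangle_\mathcal{E}$ transports correctly. Once this bookkeeping is in place, the argument is essentially the usual ``partition of unity'' trick realizing the identity of a projective module as a finite sum of rank-one endomorphisms.
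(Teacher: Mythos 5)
Your overall strategy is attractive --- the identity $T \circ \langle \xi,\eta\rangle_{\mathcal{E}} = \langle T\xi,\eta\rangle_{\mathcal{E}}$ is correct and cleanly reduces the lemma to exhibiting $\mathrm{Id}_{\mathcal{E}}$ as a finite sum of rank-one operators --- but there is a genuine gap in how you produce that decomposition of the identity. Your computation $\sum_i \langle pe_i, pe_i\rangle_{\mathcal{E}}(\zeta)=\zeta$ uses that the given Hermitian structure on $\mathcal{E}$ is the one induced from the canonical structure on $\mathcal{A}^q$ via a \emph{self-adjoint} projection $p$. Lemma~2.2(b), which you cite for this, does not deliver it: it is an \emph{existence} statement (every f.g.p.\ module is isomorphic to some $p\mathcal{A}^q$ and therefore \emph{admits} a Hermitian structure), not a classification of all Hermitian structures. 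The lemma you are proving is stated for an \emph{arbitrary} Hermitian structure on $\mathcal{E}$, and the assertion that any such structure can be realized as the induced one on $p\mathcal{A}^q$ with $p=p^*$ is precisely the structure theorem the paper proves only later, in Section~3, using the classification of Hermitian structures on free modules and the spectral invariance of $\mathcal{A}$. So as written your argument either proves the lemma only for induced Hermitian structures, or silently invokes a much heavier (and at this point unproven) result.

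The paper's own proof avoids this entirely and is worth comparing. Writing $\mathcal{E}=p\mathcal{A}^q$ with $p$ merely an idempotent (no self-adjointness needed, and no compatibility between $p$ and the Hermitian structure), it decomposes an arbitrary $T\in End(\mathcal{E})$ into its coordinate functionals $T_i=\pi_i\circ T\in\mathcal{E}^*$ and uses the \emph{self-duality axiom} of the given Hermitian structure to write $T_i(\xi)=\langle\eta_i,\xi\rangle_{\mathcal{A}}$ for some $\eta_i\in\mathcal{E}$; then $T=\sum_i\langle pe_i,\eta_i\rangle_{\mathcal{E}}$. Your proof is repaired the same way: apply self-duality to the restricted coordinate functionals $\zeta\mapsto\zeta_i$ to get $\eta_i$ with $\zeta_i=\langle\eta_i,\zeta\rangle_{\mathcal{A}}$, giving $\mathrm{Id}_{\mathcal{E}}=\sum_i\langle pe_i,\eta_i\rangle_{\mathcal{E}}$, and then your associativity step finishes the job. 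At that point, though, you have essentially reproduced the paper's argument for $T=\mathrm{Id}$, and the associativity reduction buys only a modest economy.
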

\begin{proof}
Let $\,\mathcal{E} = p\mathcal{A}^q$ where $p \in M_q(\mathcal{A})$ is an idempotent and $\{e_1,\ldots,e_q\}$ be the standard basis for $\mathcal{A}^q$.
For any given $T \in End(\mathcal{E})$ one can write $\,T = \displaystyle\bigoplus_{i=1}^q T_i\, ,$ where $\,T_i = \pi_i \circ T,\, \pi_i$ denotes the projection onto the $i$-th component of
$\mathcal{A}^q$. Then $T_i(\xi) = \langle \eta_i,\xi \rangle_\mathcal{A}$ for some $\eta_i \in \mathcal{E}$, which follows from self duality of $\,\mathcal{E}$.
Then one can show directly that $T = \sum \langle\, pe_i,\eta_i\, \rangle_\mathcal{E}\, $.
\end{proof}
Now, using this lemma, we define a linear functional $\widetilde \tau$ on $End(\mathcal{E})$ as,
\begin{eqnarray*}
\widetilde{\tau} : End(\mathcal{E}) \longrightarrow \mathbb{C}
\end{eqnarray*}
\begin{eqnarray*}
\, \, \, \widetilde{\tau}(\langle\, \xi , \eta\, \rangle_\mathcal{E}) = \tau(\langle \eta , \xi \rangle_{\mathcal{A}}).\\
\end{eqnarray*}
\begin{lemma}
$\widetilde\tau$ defined above, is a trace on $End(\mathcal{E})$. 
\end{lemma}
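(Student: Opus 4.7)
My plan is to establish the trace property by reducing everything to the matrix picture $\mathcal{E} \cong p\mathcal{A}^q$ guaranteed by Lemma~\ref{Herm-I}(b), while simultaneously handling the well-definedness of $\widetilde{\tau}$ (which the excerpt silently assumes). Under the identification $\text{End}(\mathcal{E}) \cong p M_q(\mathcal{A}) p$, a rank-one endomorphism $\langle \xi, \eta \rangle_\mathcal{E}$ corresponds to the matrix $\xi \eta^* \in M_q(\mathcal{A})$ (with $(i,j)$-entry $\xi_i \eta_j^*$); this lies in the corner $p M_q(\mathcal{A}) p$ because $p\xi = \xi$ and $\eta^* = \eta^* p$ (using $p = p^*$). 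The unnormalized matrix trace then gives
\[
(\tau \otimes \mathrm{Tr})(\xi \eta^*) = \sum_{j} \tau(\xi_j \eta_j^*) = \sum_j \tau(\eta_j^* \xi_j) = \tau(\langle \eta, \xi \rangle_\mathcal{A}),
\]
where we used that $\tau$ is a trace on $\mathcal{A}$. So $\widetilde{\tau}$ agrees on generators with the restriction of $\tau \otimes \mathrm{Tr}$ to $p M_q(\mathcal{A}) p$, proving well-definedness.

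Having this, the trace property is immediate: for $S, T \in p M_q(\mathcal{A}) p$, both $ST$ and $TS$ stay in the corner, and
\[
(\tau \otimes \mathrm{Tr})(ST) = (\tau \otimes \mathrm{Tr})(TS)
\]
since $\tau \otimes \mathrm{Tr}$ is a trace on $M_q(\mathcal{A})$.

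If one prefers to avoid the matrix picture and argue directly on generators, the computation is just as short. For $S = \langle \xi_1, \eta_1 \rangle_\mathcal{E}$ and $T = \langle \xi_2, \eta_2 \rangle_\mathcal{E}$, property (b) of the Hermitian structure gives
\[
ST(\zeta) = \xi_1 \langle \eta_1, \xi_2 \rangle_\mathcal{A} \langle \eta_2, \zeta \rangle_\mathcal{A} = \langle\, \xi_1 \cdot \langle \eta_1, \xi_2 \rangle_\mathcal{A},\, \eta_2\, \rangle_\mathcal{E}(\zeta),
\]
so applying $\widetilde{\tau}$ and using property (b) once more,
\[
\widetilde{\tau}(ST) = \tau\bigl(\langle \eta_2, \xi_1 \rangle_\mathcal{A} \langle \eta_1, \xi_2 \rangle_\mathcal{A}\bigr),\qquad \widetilde{\tau}(TS) = \tau\bigl(\langle \eta_1, \xi_2 \rangle_\mathcal{A} \langle \eta_2, \xi_1 \rangle_\mathcal{A}\bigr),
\]
and these are equal because $\tau$ itself is a trace on $\mathcal{A}$. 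Extending by bilinearity concludes.

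The only real obstacle is well-definedness, because the representation of a given $T \in \text{End}(\mathcal{E})$ as a sum $\sum \langle \xi_i, \eta_i \rangle_\mathcal{E}$ is highly non-unique; the naive reading of the displayed formula only defines $\widetilde{\tau}$ on a generating set. The matrix identification above resolves this cleanly by producing an intrinsic value $(\tau \otimes \mathrm{Tr})(T)$ for $T \in p M_q(\mathcal{A}) p$, independent of any chosen decomposition. Once that is in hand, the trace property follows either by transport from $M_q(\mathcal{A})$ or by the direct computation on generators shown above.
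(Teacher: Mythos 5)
Your proof is correct, and its second, coordinate-free computation is exactly the paper's own argument: the identity $\langle\,\xi_1,\eta_1\,\rangle_\mathcal{E}\,\langle\,\xi_2,\eta_2\,\rangle_\mathcal{E} = \langle\,\xi_1\langle\eta_1,\xi_2\rangle_\mathcal{A},\eta_2\,\rangle_\mathcal{E}$ followed by the trace property of $\tau$ on $\mathcal{A}$, extended by linearity. What you add beyond the paper is the well-definedness of $\widetilde\tau$, which the paper indeed glosses over (it defines $\widetilde\tau$ only on the rank-one generators and defers the analytic facts to \cite{3}); making this explicit is a genuine improvement. One caveat: your matrix-picture argument identifies $\langle\,\xi,\eta\,\rangle_\mathcal{E}$ with $\xi\eta^*$ in $pM_q(\mathcal{A})p$, which presupposes that the \emph{given} Hermitian structure on $\mathcal{E}$ is the one induced from the canonical structure on $\mathcal{A}^q$ by a self-adjoint projection $p$. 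Lemma~\ref{Herm-I}(b) does not quite deliver this: it produces a projection $p$ with $\mathcal{E}\cong p\mathcal{A}^q$ and then equips $\mathcal{E}$ with the \emph{induced} structure, whereas the structure you start from is arbitrary; the statement that an arbitrary Hermitian structure can be realized as an induced one is the Theorem of Section~3 (available here, since the smooth subalgebra is stable under holomorphic functional calculus, but proved later). Alternatively you can avoid that theorem entirely: writing a general structure as $\langle\xi,\eta\rangle' = \xi^* T\eta$ with $T$ positive and invertible, the operator $\langle\,\xi,\eta\,\rangle_\mathcal{E}$ has matrix $\xi\eta^* T$, and $\widetilde\tau(\langle\,\xi,\eta\,\rangle_\mathcal{E}) = \tau(\eta^* T\xi) = (\tau\otimes\mathrm{Tr})(\xi\eta^* T)$, so $\widetilde\tau$ is again the restriction of the intrinsic functional $\tau\otimes\mathrm{Tr}$ and well-definedness goes through. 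With that small repair, both halves of your argument stand.
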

\begin{proof}
One can easily check that $\langle\, \xi_1 , \eta_1\, \rangle_\mathcal{E} \langle\, \xi_2 , \eta_2\, \rangle_\mathcal{E} = \langle\,  \xi_1\langle\eta_1,\xi_2\rangle_\mathcal{A},\eta_2\, \rangle_\mathcal{E}$. 
Now use the fact that $\tau$ is a trace on $\mathcal{A}$.
\end{proof}
Moreover it can be shown that $\widetilde\tau$ is faithful (see \cite{3}). Finally, the Yang-Mills functional on $C(\mathcal{E})$ is given by,
\begin{eqnarray*}
\textit{YM}\, (\nabla) = -\widetilde{\tau} (\{ \varTheta_\nabla , \varTheta_\nabla \}_\mathcal{E})
\end{eqnarray*}
Notice that by Lemma~(\,\ref{skew-adjoint}\,) $\varTheta_\nabla$ takes value in $End(\mathcal{E})_{skew}$. Hence this minus sign will force $\textit{YM}\,$ to take nonnegative real values.
\medskip

Now we will deal with $\mathcal{A} = \mathcal{A}_\varTheta$, the noncommutative $n$-torus. We recall non-commutative $n$-torus $\mathcal{A}_\varTheta$ as defined in (\cite{8}).
Let $\varTheta$ be a $n \times n$ real skew-symmetric matrix. Denote by $\mathcal{A}_\varTheta$,  the universal $C^*$-algebra generated by $n\,$ unitaries $\, U_1,\ldots,U_n\,$ 
satisfying $U_k U_m = e^{2\pi i\varTheta_{km}} U_m U_k$, where $k,m \in \{1,\ldots,n\}$. Throughout this paper $i$ will stand for $\sqrt -1$. 
On the noncommutative $n$-torus $\mathcal{A}_\varTheta$, $G=\mathbb{T}^n$(connected Lie group) acts as follows:
\begin{eqnarray*}
\alpha_{(z_1,\ldots,z_n)}(U_k) =z_kU_k \, , \, k = 1,\ldots,n.
\end{eqnarray*}
The smooth subalgebra of $\mathcal{A}_\varTheta$, is given by
\begin{eqnarray*}
\mathcal{A}_\varTheta^\infty := \{\sum \, a_{\textbf{r}}\, U^{\textbf{r}} : \{ a_{\textbf{r}} \} \in \mathbb{S} (\mathbb{Z}^{n})\, , \, \textbf{r}=(r_1,\ldots,r_n) \in \mathbb{Z}^n \}
\end{eqnarray*}
where $\, \mathbb{S}(\mathbb{Z}^{n})$ denotes vector space of multisequences $(a_\textbf{r})$ that decay faster than the inverse of any polynomial 
in $\textbf{r} = (r_1,\ldots,r_n)$.

 This subalgebra is equipped with a $G$-invariant {\it tracial state}, given by $\tau(a) = a_\textbf{0} \, ,$ where $\textbf{0} = (0,\ldots,0)$. 
Then $\tau$ extends to a faithful trace on $\mathcal{A}_\varTheta$. We further assume that the lattice  $\Lambda_\varTheta$ generated by columns of $\varTheta$ is such that  $\,\Lambda_\varTheta + \mathbb{Z}^n\,$ 
is dense in $\mathbb{R}^n$. The advantage of choosing such a matrix $\,\varTheta\,$ is that, the tracial state $\tau$ becomes unique and $\mathcal{A}_\varTheta$ (hence $\mathcal{A}_\varTheta^\infty$) 
becomes simple (see \cite{5},\, Page $537$). The {\it Hilbert space} obtained by applying the G.N.S. construction to $\tau$ can be identified with $l^2(\mathbb{Z}^n)$.

From now on we will work with $\mathcal{A}_\varTheta^\infty$ only and hence for notational brevity we denote it by $\mathcal{A}_\varTheta$.
In this case $\mathcal{L} = Lie(G)\,$ is $\,\mathbb{R}^n$. Let $\{e_1,e_2,\ldots,e_n\}$ be the standard basis of $\,\mathbb{R}^n$ and the associated derivations $\delta_{e_1},\ldots,\delta_{e_n}$. We will denote $\delta_{e_j}$ by $\widetilde{\delta_j}$. 

The derivations $\{\widetilde{\delta_1},\ldots,\widetilde{\delta_n}\}$ on $\mathcal{A}_\varTheta$ are given by,
\begin{eqnarray}\label{induced actions}
\widetilde{\delta_j}(\displaystyle\sum_{\textbf{r}} \, a_{\textbf{r}} U^{\textbf{r}}) = i\displaystyle\sum_{\textbf{r}} \, r_j a_{\textbf{r}} U^{\textbf{r}}
\end{eqnarray}
It can be easily checked that these derivations commute and they are $^*$-derivations of $\mathcal{A}_\varTheta$ i,e.
\begin{eqnarray*}
(\widetilde\delta_j(a))^* = \widetilde\delta_j(a^*)\, \, \, \, ; \, \, \, \, \widetilde\delta_j(ab) = \widetilde\delta_j(a)b + a\widetilde\delta_j(b).
\end{eqnarray*}
A connection is given by $n$ maps $\nabla_{\widetilde{\delta_j}} : \mathcal{E} \longrightarrow \mathcal{E}$ such that $\nabla_{\widetilde{\delta_j}}(\xi.a) = \nabla_{\widetilde{\delta_j}}(\xi)a + \xi \widetilde{\delta_j}(a)$.
So the space of compatible connections $\nabla$ consists of $n$-tuples of maps $(\nabla_{\widetilde{\delta_1}},\ldots,\nabla_{\widetilde{\delta_n}})$ such that,
\begin{eqnarray}\label{connection-1}
\nabla(\xi) = \sum_{j=1}^n \nabla_{\widetilde{\delta_j}}(\xi) \otimes e_j. 
\end{eqnarray}
\begin{eqnarray}
\langle \nabla_{\widetilde{\delta_j}}(\xi),\eta \rangle_{\mathcal{A}_\varTheta} + \langle \xi, \nabla_{\widetilde{\delta_j}}(\eta) \rangle_{\mathcal{A}_\varTheta} & = & \widetilde{\delta_j}\left(\langle \xi,\eta \rangle_{\mathcal{A}_\varTheta}\right).
\end{eqnarray}

The {\it curvature} of a {\it connection} $\nabla$ is given by, $\varTheta_\nabla (\widetilde{\delta_j}\wedge\widetilde{\delta_k}) = [\nabla_{\widetilde{\delta_j}} , \nabla_{\widetilde{\delta_k}}]\,$ 
because $\,[\widetilde{\delta_j} , \widetilde{\delta_k}] = 0$ in this case. We have $[\nabla_{\widetilde{\delta_j}} , \nabla_{\widetilde{\delta_k}}]^* = - [\nabla_{\widetilde{\delta_j}} , \nabla_{\widetilde{\delta_k}}]$ 
by Lemma~(\,\ref{skew-adjoint}\,). The bilinear form on space of alternating $2$-forms with values in $End(\mathcal{E})$ becomes,
\begin{eqnarray*}
\{\Phi,\Psi\}_\mathcal{E} = \displaystyle\sum_{j<k}\Phi(\widetilde{\delta_j}\wedge\widetilde{\delta_k})
\Psi(\widetilde{\delta_j}\wedge\widetilde{\delta_k})
\end{eqnarray*}

Finally, the Yang-Mills functional of $\nabla$ is given by,
\begin{eqnarray*}
\textit{YM}\, (\nabla) = -\widetilde{\tau} (\{ \varTheta_\nabla , \varTheta_\nabla \}_\mathcal{E}) & = & -\widetilde{\tau} \left(\displaystyle\sum_{j<k}[\nabla_{\widetilde{\delta_j}},\nabla_{\widetilde{\delta_k}}]^2\right)\\ 
                                                                                                   & = & \widetilde{\tau} \left(\displaystyle\sum_{j<k}[\nabla_{\widetilde{\delta_j}},\nabla_{\widetilde{\delta_k}}]^*
[\nabla_{\widetilde{\delta_j}},\nabla_{\widetilde{\delta_k}}]\right)\\
                                                                                                   &  \geq 0 &.
\end{eqnarray*}
For notational simplicity we write,
\begin{eqnarray}\label{YM}
\textit{YM}(\nabla) = \displaystyle\sum_{j<k}\widetilde{\tau}\left([\nabla_j,\nabla_k]^*[\nabla_j,\nabla_k]\right)\,.
\end{eqnarray}
\medskip
 
\section{Second approach to Yang-Mills}

We first recall the differential graded algebra from \cite{2}.
\begin{definition}
A {\it spectral triple} $(\mathcal{A},\mathcal{H},D)$, over an algebra $\mathcal{A}$ with involution $*$, consists of the following things~:
\begin{enumerate}
\item a $\,*\,$-representation of $\mathcal{A}$ on a Hilbert space $\mathcal{H}$.
\item an unbounded selfadjoint operator $D$.
\item $D$ has compact resolvent and $[D,a]$ extends to a bounded operator on $\mathcal{H}$ for every $a \in \mathcal{A}$.
\end{enumerate}
\end{definition}
We shall assume that $\mathcal{A}$ is unital and the unit $1 \in \mathcal{A}$ acts as the identity on $\mathcal{H}$. If $|D|^{-d}$ is in the ideal of Dixmier traceable operators $\mathcal{L}^{(1,\infty)}$ then we say 
that the {\it spectral triple} is $(d,\infty)$-summable.

Let $\,\Omega^\bullet(\mathcal{A}) = \displaystyle\bigoplus_{k=0}^\infty \Omega^k(\mathcal{A})\,$ be the universal graded algebra over $\mathcal{A}$ where,
\begin{enumerate}
\item $\Omega^0(\mathcal{A}) = \mathcal{A}$
\item $\Omega^k(\mathcal{A}) = \{\, \displaystyle\sum_j a_0^jda_1^j\ldots da_k^j : a_r^j \in \mathcal{A}\, ; r=0,\ldots,k \}$.
\end{enumerate}
Here $d$ is an abstract operator such that,
\begin{enumerate}
\item $d1 = 0$
\item $d^2 = 0$
\item $d(\omega_1\omega_2) = (d\omega_1)\omega_2 + (-1)^{deg (\omega_1)} \omega_1 d\omega_2 \, , \, \, \forall \, \omega_j \in \Omega^\bullet(\mathcal{A})$.
\end{enumerate}
For $\omega \in \Omega^k(\mathcal{A})$ , $deg(\omega) = 0 \, , \, +1$ or $-1$ accordingly as whether $k$ is zero, even or odd. The involution $*$ of $\mathcal{A}$ extends uniquely to an involution on 
$\Omega^\bullet(\mathcal{A})$ by the rule $(da)^* = -da^*$ and $\pi$ also extends to a $*\,$-representation (again denoted by $\pi$) of $\,\Omega^\bullet(\mathcal{A})$ on $\mathcal{H}$ by,
\begin{eqnarray*}
\pi(a_0da_1\ldots da_k) = a_0[D,a_1]\ldots [D,a_k] \, ; \, a_j \in \mathcal{A}.
\end{eqnarray*}
Let $J_0^{(k)} = \{ \omega \in \Omega^k : \pi(\omega) = 0 \}$ and $J^\prime = \bigoplus J_0^{(k)}$. Since $J^\prime$ fails to be a differential graded ideal, the quotient $\, \Omega^\bullet/J^\prime$ is not a differential graded algebra. 
This problem can be overcome by letting $J^\bullet = \bigoplus J^{(k)}$ where $J^{(k)} = J_0^{(k)} + dJ_0^{(k-1)}$. Then $J^\bullet$ becomes a differential graded two-sided ideal and the quotient $\Omega_D^\bullet = \Omega^\bullet/J^\bullet$
becomes a differential graded algebra.

The representation $\pi$ gives an isomorphism,
\begin{eqnarray*}
\Omega_D^k \cong \pi(\Omega^k)/\pi(dJ_0^{k-1}).
\end{eqnarray*}
Arbitrary element of $\Omega_D^k$ can be viewed as a class of elements
\begin{eqnarray*}
\rho = \sum_j a_0^j \, [D, a_1^j ] \cdots [D, a_k^j]
\end{eqnarray*}
modulo the sub-bimodule of elements of the form,
\begin{eqnarray*}
\sum_j [D,b_0^j][D, b_1^j ] \cdots [D, b_{k-1}^j]  : b_r^j \in  \mathcal{A}\, ; \quad \sum_j b_0^j[D, b_1^j ] \cdots [D, b_{k-1}^j] = 0\, \, .
\end{eqnarray*}
Suppose we are given a unital $*\,$-algebra $\mathcal{A}$ and a $(d,\infty)$-summable spectral triple $(\mathcal{A},\mathcal{H},D)$ over $\mathcal{A}$.
Henceforth we will assume $\mathcal{A}$ is a subalgebra stable under holomorphic functional calculus in a $C^*$-algebra. Let $\mathcal{E}$ be a f.g.p (right)module over $\mathcal{A}$ equipped with 
a {\it Hermitian} structure on it. There is a right $\mathcal{A}$-module $\mathcal{F}$ such that $\mathcal{E}\bigoplus \mathcal{F}\cong \mathcal{A}^q\,$ for some $q$. Since $\mathcal{A}^q$ has a topology, 
$\mathcal{E}$ inherits the topology from $\mathcal{A}^q$. Also $\mathcal{E}^*$ inherits topology from $\mathcal{A}^q$ 
because $\mathcal{E}^*\bigoplus \mathcal{F}^*\cong {(\mathcal{A}^q)}^*\cong \mathcal{A}^q$. As because we have topology now, we can expect the isomorphism between $\mathcal{E}$ and $\mathcal{E}^*$ to be topological, 
which turns out to be true by the following lemma.    
\begin{lemma}\label{continuity}
If two finitely generated projective $\mathcal{A}$-modules $\mathcal{E}_1$ and $\mathcal{E}_2$ are algebraically isomorphic then they are topologically also isomorphic.
\end{lemma}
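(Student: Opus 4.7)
The plan is to reduce the statement to the fact that any $\mathcal{A}$-linear map between free modules $\mathcal{A}^n \to \mathcal{A}^m$ is automatically continuous, and then lift an arbitrary algebraic isomorphism between $\mathcal{E}_1$ and $\mathcal{E}_2$ to one between ambient free modules. Since $\mathcal{A}$ sits inside a $C^*$-algebra as a subalgebra, it is a topological algebra (with continuous multiplication), and the product topology on $\mathcal{A}^q$ is exactly the one used to topologize the f.g.p modules in question.

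First I would invoke Lemma~\ref{Herm-I}(b) to realize $\mathcal{E}_i \cong p_i \mathcal{A}^{q_i}$ for projections $p_i \in M_{q_i}(\mathcal{A})$, so that $\mathcal{E}_i$ inherits the subspace topology from $\mathcal{A}^{q_i}$. Note that the idempotent $p_i$, acting as left multiplication by a matrix over $\mathcal{A}$, is continuous, so $\mathcal{E}_i = p_i \mathcal{A}^{q_i}$ is closed and the subspace topology on $\mathcal{E}_i$ agrees with the one coming from any other presentation as a summand of a free module.

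Given an algebraic isomorphism $\phi : \mathcal{E}_1 \to \mathcal{E}_2$, extend it to a right $\mathcal{A}$-linear map $\tilde{\phi} : \mathcal{A}^{q_1} \to \mathcal{A}^{q_2}$ by $\tilde{\phi}(\xi) = \phi(p_1 \xi)$, viewed inside $\mathcal{A}^{q_2}$ via the inclusion $\mathcal{E}_2 \hookrightarrow \mathcal{A}^{q_2}$. Right $\mathcal{A}$-linearity forces $\tilde{\phi}$ to be determined by the images $\tilde{\phi}(e_j)$ of the standard basis $\{e_1, \ldots, e_{q_1}\}$; setting $M_{ij}$ to be the $i$-th component of $\tilde{\phi}(e_j) \in \mathcal{A}^{q_2}$ gives $\tilde{\phi}(\xi) = M \xi$. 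Since multiplication in $\mathcal{A}$ is jointly continuous, so is this matrix multiplication, whence $\tilde{\phi}$ and its restriction $\phi = \tilde{\phi}|_{\mathcal{E}_1}$ are continuous. Applying the same reasoning to $\phi^{-1}$ gives continuity of the inverse, so $\phi$ is a homeomorphism.

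There is no real obstacle here: the only point demanding care is fixing a topology on $\mathcal{A}$ that makes it a topological algebra (continuous multiplication), which is ensured by the ambient $C^*$-structure. Once that is in place the matrix representation of module maps makes continuity automatic, and the argument is symmetric in $\phi$ and $\phi^{-1}$.
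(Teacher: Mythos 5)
Your proof is correct, and it reaches the conclusion by a more direct route than the paper. Both arguments ultimately rest on the same fact: a right $\mathcal{A}$-linear map between finitely generated projective modules extends (via projection and inclusion) to a map between free modules, which is left multiplication by a matrix over $\mathcal{A}$ and hence continuous for the norm topology inherited from the ambient $C^*$-algebra. You apply this fact separately to $\phi$ and $\phi^{-1}$ and stop there, which is all the lemma requires. The paper instead first stabilizes so that both modules are cut out of the \emph{same} free module $\mathcal{A}^{k+l}$ by idempotents $p_1,p_2$, forms $f=v_2\circ\phi\circ u_1$ and $g=v_1\circ\phi^{-1}\circ u_2$ with $f\circ g=p_2$, $g\circ f=p_1$, and then assembles the explicit $2\times 2$ block matrix $U$ satisfying $\tilde p_1=U\tilde p_2U^{-1}$ with $U$ invertible in $M_{2(k+l)}(\mathcal{A})$. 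That construction proves slightly more than the lemma states --- it exhibits the two idempotents as similar via an invertible matrix over $\mathcal{A}$, the standard $K$-theoretic normal form --- but the continuity conclusion it draws at the end (``$U$ is a bounded operator'') is exactly the observation your argument makes directly. One small remark: for this lemma you do not need the self-adjointness supplied by Lemma~\ref{Herm-I}(b); plain idempotents $p_i\in M_{q_i}(\mathcal{A})$ arising from a choice of complement suffice, and your side claim that $p_i\mathcal{A}^{q_i}$ is closed is not actually used anywhere in the argument.
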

\begin{proof}
Since both the modules are projective, we can find $\mathcal{F}_1$ and $\mathcal{F}_2$ such that, $\mathcal{E}_1 \bigoplus \mathcal{F}_1 \cong \mathcal{A}^k$ and $\mathcal{E}_2 \bigoplus \mathcal{F}_2 \cong \mathcal{A}^l$. 
Then, $\mathcal{E}_1 \bigoplus \mathcal{F}_1 \bigoplus \mathcal{A}^l \cong \mathcal{A}^{k+l}$ and $\mathcal{E}_2 \bigoplus \mathcal{F}_2 \oplus \mathcal{A}^k \cong \mathcal{A}^{k+l}$. Hence we can write $\mathcal{E}_1 = p_1 
\mathcal{A}^{k+l}$ and $\mathcal{E}_2 = p_2 \mathcal{A}^{k+l}$, where $p_1,p_2 \in M_{k+l}(\mathcal{A})$ are idempotents. Let $u_j : \mathcal{A}^{k+l} \longrightarrow \mathcal{E}_j$ denote the projection maps and $v_j : 
\mathcal{E}_j \longrightarrow \mathcal{A}^{k+l}$ denote the inclusion maps for $j=1,2$. If we denote the isomorphism between $\mathcal{E}_1$ and $\mathcal{E}_2$ by $\,\phi\,$ then considering $f = v_2 \circ \phi \circ u_1$ 
and $g = v_1 \circ \phi^{-1} \circ u_2$ in $Hom(\mathcal{A}^{k+l},\mathcal{A}^{k+l})$, it is easily seen that $f \circ g = p_2$ and $g \circ f = p_1$. If we choose
\[ \tilde p_1 = 
\begin{pmatrix}
p_1 & 0\\
0 & 0
\end{pmatrix}
\, \, , \quad \tilde p_2 = 
\begin{pmatrix}
p_2 & 0\\
0 & 0
\end{pmatrix}
\]
\[ U = 
\begin{pmatrix}
f & 1-f\circ g\\
1-g\circ f & g
\end{pmatrix}
\]
then we see that $\tilde p_1 = U\tilde p_2 U^{-1}$.
U is a bounded operator acting on $\mathcal{A}\otimes \mathbb{C}^q$ where $q = 2(k+l)$. That is to say that the isomorphism is topological.
\end{proof}
\begin{lemma}
All Hermitian structures on a free module over $\mathcal{A}$ are isomorphic to each other.
\end{lemma}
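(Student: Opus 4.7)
The plan is to parameterise Hermitian structures on $\mathcal{A}^{q}$ by positive invertible matrices in $M_{q}(\mathcal{A})$, and then use holomorphic functional calculus to produce a square root that implements an isomorphism with the canonical structure.

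First, I would observe that any Hermitian structure on $\mathcal{A}^{q}$ is determined by its \emph{Gram matrix} $H\in M_{q}(\mathcal{A})$ with entries $H_{ij}=\langle e_{i},e_{j}\rangle_{\mathcal{A}}$: sesquilinearity together with property~(b) of Definition~\ref{def} gives
\[
\Bigl\langle\,\sum_{i}e_{i}a_{i},\,\sum_{j}e_{j}b_{j}\,\Bigr\rangle_{\mathcal{A}} \;=\; \sum_{i,j}a_{i}^{*}H_{ij}b_{j} \;=\; \mathbf{a}^{*}H\mathbf{b}.
\]
Property~(a) translates to $H=H^{*}$, positive-definiteness translates to positivity of $H$ inside the ambient $C^{*}$-algebra $M_{q}(A)$ (where $A\supseteq\mathcal{A}$ is the $C^{*}$-algebra in which $\mathcal{A}$ is spectrally invariant), and the self-duality axiom~(c) is equivalent to $H$ being invertible in $M_{q}(\mathcal{A})$. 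Two such structures with Gram matrices $H_{1},H_{2}$ are then isomorphic iff there is $U\in GL_{q}(\mathcal{A})$ with $U^{*}H_{2}U=H_{1}$, so it is enough to realise every admissible $H$ as $U^{*}U$, thereby identifying it with the canonical structure $H=I$.

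For this I would take $U=H^{1/2}$, the principal square root. Since $H$ is positive and invertible in the $C^{*}$-algebra $M_{q}(A)$, its spectrum lies in some interval $[c,C]\subset(0,\infty)$, and $z\mapsto\sqrt{z}$ is holomorphic on a neighbourhood thereof. Because $\mathcal{A}$ is closed under holomorphic functional calculus in $A$, so is $M_{q}(\mathcal{A})$ in $M_{q}(A)$, and hence $H^{1/2}\in M_{q}(\mathcal{A})$; it is self-adjoint and invertible (its inverse being $(H^{-1})^{1/2}$, obtained by the same argument applied to $H^{-1}$), and satisfies $(H^{1/2})^{*}H^{1/2}=H$. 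Left multiplication by $U=H^{1/2}$ is then a right-$\mathcal{A}$-linear automorphism of $\mathcal{A}^{q}$ that intertwines the canonical pairing with the given one, since $(U\mathbf{a})^{*}(U\mathbf{b})=\mathbf{a}^{*}H\mathbf{b}$.

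The main obstacle lies in the very first step, namely translating abstract positive-definiteness of the sesquilinear form into genuine positivity of $H$ as an element of the $C^{*}$-algebra $M_{q}(A)$, and likewise identifying axiom~(c) with invertibility (rather than mere injectivity) of $H$; both need a short Hilbert-module argument viewing $\mathcal{A}^{q}$ inside $A^{q}$. Once those identifications are secure, the holomorphic functional calculus stability of $\mathcal{A}\subset A$ immediately produces the square root in $M_{q}(\mathcal{A})$ and the remainder of the proof is a purely algebraic verification.
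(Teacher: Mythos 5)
Your proposal is correct and follows essentially the same route as the paper: encode the Hermitian structure by its Gram matrix $T$ with $\langle\xi,\eta\rangle'=\xi^{*}T\eta$, use the self-duality axiom to show $T$ is invertible with inverse in $M_{q}(\mathcal{A})$ (the paper does this by proving bijectivity and then invoking its continuity lemma plus spectral invariance), and implement the isomorphism by $\xi\mapsto\sqrt{T}\,\xi$ via holomorphic functional calculus. The "Hilbert-module argument" you flag as the remaining obstacle is precisely the step the paper carries out explicitly, so there is no divergence in method.
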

\begin{proof}
The canonical {\it Hermitian} structure on $\mathcal{A}^q$ was given by $\langle \xi,\eta \rangle_{\mathcal{A}} = \sum_{k=1}^q \xi_k^*\eta_k$. We show that any other {\it Hermitian} structure 
is isomorphic to this one. Let $\langle\, \, , \, \rangle^\prime : \mathcal{A}^q \times \mathcal{A}^q \longrightarrow \mathcal{A}$ be another {\it Hermitian} structure on $\mathcal{A}^q$. Let $\{e_1,\ldots,e_q\}$ 
be standard basis of $\mathcal{A}^q$. Let $T=((t_{rs}))$ be given by $t_{sr}=\langle e_r,e_s\rangle^\prime$. Then $\langle \xi,\eta \rangle^\prime = \displaystyle\sum_{r,s} \langle e_r\xi_r,e_s\eta_s\rangle^\prime 
= \displaystyle\sum_{r,s} \xi_r^*\langle e_r,e_s\rangle^\prime\eta_s$. That is, $\langle \xi,\eta \rangle^\prime = \xi^*T\eta$, where $T \in M_q(\mathcal{A})$ is positive-definite. Hence $T$ is a positive element 
in the  $C^*$-algebra $M_q(\mathcal{B}(\mathcal{H}))$. Note that for $\xi \in \mathcal{A}^q$, $\xi^* = (\xi_1^*,\ldots,\xi_q^*)$ where $\xi = (\xi_1,\ldots,\xi_q)$. We consider elements of $\mathcal{A}^q$ 
as column vector, whereas their $*$ will denote row vector. So here $\xi^*$ is a row vector and $\xi$ is a column vector. We denote $\langle\, \, , \, \rangle^\prime$ by $\langle\, \, , \, \rangle_T$. 
Hence, {\it Hermitian} structures on $\mathcal{A}^q$ are parametrized by such $T$. We show that $T$ is one to one. Suppose $T\xi =0$. Then for any $\eta \in \mathcal{E}$, we get $\Phi_\xi(\eta) = \xi^*T\eta 
= (T\xi)^*\eta = 0$, showing $\Phi_\xi = 0$. Since $\xi \longmapsto \Phi_\xi$ is an isomorphism, we get $\xi =0$. Hence $T$ is one to one. To see $T$ is onto, we pick any $\zeta$ from $\mathcal{A}^q$. 
Then $\eta \longmapsto \zeta^*\eta$ is a $\mathcal{A}$-linear map on $\mathcal{A}^q$ taking value in $\mathcal{A}$ (we are dealing with right $\mathcal{A}$-module). Hence there exists $\xi$ in $\mathcal{A}^q$ such that, 
\begin{eqnarray*}
\Phi_\xi(\eta) = \zeta^*\eta = \xi^*T\eta = (T\xi)^*\eta
\end{eqnarray*}
Hence $\zeta = T\xi$, showing $T$ is onto. We define 
\begin{eqnarray*}
\widetilde T : \mathcal{A}^q \longrightarrow \mathcal{A}^q
\end{eqnarray*}
\begin{eqnarray*}
\widetilde T (T\xi) = \xi
\end{eqnarray*}
To show this map is continuous, let $T\xi_k \rightarrow T\xi$ in $\mathcal{A}^q$. Then $\xi_k^*T\eta \rightarrow \xi^*T\eta$ for any $\eta$ because multiplication is continuous with respect to the topology of 
$\mathcal{A}^q$. Hence $\Phi_{\xi_k} \rightarrow \Phi_{\xi}\, $. By Lemma~(\ref{continuity}\,)  $\xi \longmapsto \Phi_\xi$ is a continuous isomorphism. Hence we get $\xi_k \rightarrow \xi$, which shows 
continuity of $\widetilde T$. Thus $T$ has a bounded inverse $\widetilde T$ implying spectrum of $T$ is away from zero. Since $T$ is positive, $\sqrt T$ is a holomorphic function of $T$. Now define,
\begin{eqnarray*}
\Psi : \mathcal{A}^q \longrightarrow \mathcal{A}^q
\end{eqnarray*}
\begin{eqnarray*}
\Psi(\xi) = \sqrt{T}\xi
\end{eqnarray*}
Then, $\langle \xi , \eta\rangle_T = \xi^*T\eta = \xi^*\sqrt{T}\sqrt{T}\eta = \langle \Psi(\xi),\Psi(\eta)\rangle$. Since $\mathcal{A}$ is stable under holomorphic functional calculus in $\mathcal{B}(\mathcal{H})$, 
inverse of $\,T\,$ i,e. $\,\widetilde T\,$ lies in $M_q(\mathcal{A}) \subseteq M_q(\mathcal{B}(\mathcal{H}))\, $(see \cite{9}). Invertibility of $T$ in $M_q(\mathcal{A})$ gives invertibility of $\Psi$. So $\Psi$ 
gives an isomorphism between the canonical {\it Hermitian} structure $\langle\, \, ,\, \rangle_{\mathcal{A}}$ on $\mathcal{A}$ and {\it Hermitian} structure obtained throught $T$. Hence we are done.
\end{proof}
Using this lemma we can conclude the following fact about {\it Hermitian} structures on a f.g.p module which is also important in our calculation of Yang-Mills.
\begin{theorem}
Let $\mathcal{E}$ be a f.g.p $\mathcal{A}$-module with a {\it Hermitian} structure. Then we can have a self-adjoint idempotent $\, p \in M_q(\mathcal{A})$ such that $\,\mathcal{E} = p\mathcal{A}^q\,$ and $\,\mathcal{E}\,$ 
has the induced {\it Hermitian} structure.
\end{theorem}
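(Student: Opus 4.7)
The plan is to realize the given Hermitian structure on $\mathcal{E}$ as the restriction of the canonical structure on some $\mathcal{A}^q$, by first fixing a projection realization of $\mathcal{E}$ and then correcting the discrepancy of Hermitian data by an isometry of the ambient free module. Apply Lemma~(\,\ref{Herm-I}\,)(b) to obtain an integer $q$, a projection $p \in M_q(\mathcal{A})$, and a module isomorphism $\phi : \mathcal{E} \to p\mathcal{A}^q$; transport the given Hermitian structure across $\phi$ to a Hermitian structure $\langle \cdot, \cdot \rangle'$ on $p\mathcal{A}^q$, which a priori need not agree with the restriction of the canonical structure to $p\mathcal{A}^q$. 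Equip $(1-p)\mathcal{A}^q$ with any Hermitian structure (Lemma~(\,\ref{Herm-I}\,)(a) applied to $1-p$), and take the orthogonal direct sum along $\mathcal{A}^q = p\mathcal{A}^q \oplus (1-p)\mathcal{A}^q$ to produce a Hermitian structure $\langle \cdot, \cdot \rangle_{\mathrm{ext}}$ on $\mathcal{A}^q$ whose restriction to $p\mathcal{A}^q$ is precisely $\langle \cdot, \cdot \rangle'$.

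By the preceding lemma classifying Hermitian structures on free modules, $\langle \xi, \eta \rangle_{\mathrm{ext}} = \xi^* T \eta$ for a positive invertible $T \in M_q(\mathcal{A})$, and $\Psi(\xi) := \sqrt{T}\,\xi$ is a module automorphism of $\mathcal{A}^q$ satisfying $\langle \Psi(\xi), \Psi(\eta) \rangle_{\mathcal{A}} = \langle \xi, \eta \rangle_{\mathrm{ext}}$, where $\langle \cdot, \cdot \rangle_{\mathcal{A}}$ denotes the canonical structure. The orthogonality of $p\mathcal{A}^q$ and $(1-p)\mathcal{A}^q$ in $\langle \cdot, \cdot \rangle_{\mathrm{ext}}$ forces $pT(1-p) = 0 = (1-p)Tp$, which together yield $pT = Tp$; holomorphic functional calculus then ensures that $\sqrt{T}$ also commutes with $p$, so $\Psi$ restricts to a module automorphism of the submodule $p\mathcal{A}^q$.

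Consequently $\Psi|_{p\mathcal{A}^q}$ is an isometry from $(p\mathcal{A}^q, \langle \cdot, \cdot \rangle')$ onto $(p\mathcal{A}^q, \langle \cdot, \cdot \rangle_{\mathcal{A}}|_{p\mathcal{A}^q})$, and the composition $\Psi|_{p\mathcal{A}^q} \circ \phi$ gives the required isomorphism of Hermitian modules between $\mathcal{E}$ and $p\mathcal{A}^q$ equipped with the induced Hermitian structure. The main non-routine step is extracting the commutation $[T,p]=0$ from the orthogonality of the direct sum decomposition; everything else is a straightforward combination of the previous two lemmas together with the stability of $\mathcal{A}$ under holomorphic functional calculus.
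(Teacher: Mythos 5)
Your proposal is correct, and it runs on the same engine as the paper's own proof: extend the given Hermitian structure orthogonally to one on the free module $\mathcal{A}^q$, invoke the lemma that every Hermitian structure on $\mathcal{A}^q$ has the form $\langle \xi,\eta\rangle_T=\xi^*T\eta$ with $T$ positive, invertible in $M_q(\mathcal{A})$, and carried to the canonical structure by $\Psi=\sqrt{T}$, and then read off the conclusion. The difference lies in the finish. The paper takes an arbitrary complement $\mathcal{F}$ of $\mathcal{E}$, equips it with some Hermitian structure, and shows the skew projection $p(e+f)=e$ is self-adjoint with respect to the assembled structure; the final transport through $\Psi$ to the canonical structure (under which that projection becomes $\sqrt{T}\,p\,\sqrt{T}^{-1}$) is left implicit. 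You instead fix a self-adjoint matrix projection $p$ at the outset via Lemma~\ref{Herm-I}(b), take $(1-p)\mathcal{A}^q$ as the complement, and extract $pT(1-p)=0$, hence $[T,p]=0$, from orthogonality (this uses $p^*=p$, which your setup guarantees); functional calculus then gives $[\sqrt{T},p]=0$, so $\Psi$ restricts to an automorphism of $p\mathcal{A}^q$ and the very same $p$ appears in the conclusion, with $\Psi|_{p\mathcal{A}^q}\circ\phi$ an explicit isometry from $(\mathcal{E},\langle\cdot,\cdot\rangle_{\mathcal{E}})$ onto $p\mathcal{A}^q$ with the induced canonical structure. What your route buys is precisely this explicitness at the step the paper glosses over; what the paper's route buys is brevity, since it never needs the commutation relation, only self-adjointness of the skew projection relative to the non-canonical structure (in matrix terms the equivalent identity $p^*T=Tp$). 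One routine point, unproved in both versions and worth a line: the orthogonal direct sum of two Hermitian structures is again self-dual, which is needed before the free-module classification lemma can be applied.
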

\begin{proof}
Let $\mathcal{E}$ be a f.g.p $\mathcal{A}$-module with a {\it  Hermitian} structure $\langle\, \, ,\, \rangle_\mathcal{E}$. Because $\mathcal{E}$ is projective, we can have an $\mathcal{A}$-module $\mathcal{F}$ such that 
$\mathcal{E} \bigoplus \mathcal{F} \cong \mathcal{A}^q$ for some natural number $q$. Since $\mathcal{F}$ is also f.g.p $\, \mathcal{A}$-module, by Lemma~(\,\ref{Herm-I}\,) $\mathcal{F}$ has a {\it Hermitian} 
structure say $\langle\, \, ,\, \rangle_\mathcal{F}$. Then $\mathcal{E} \bigoplus \mathcal{F}$ posseses a Hermitian structure $\langle\, \, , \, \rangle$ given by,
\begin{eqnarray*}
\langle (e_1,f_1),(e_2,f_2)\rangle = \langle e_1,e_2\rangle_\mathcal{E} + \langle f_1,f_2\rangle_\mathcal{F}.
\end{eqnarray*}
i,e. we get a {\it Hermitian} structure on $\mathcal{A}^q$ coming from $\mathcal{E}$ and $\mathcal{F}$. By our previous lemma, this {\it Hermitian} structure is isomorphic with the canonical one.  Note that 
$\mathcal{E}$ is orthogonal to $\mathcal{F}$ with respect to this {\it Hermitian} structure. Let $p$ be a projection from $\mathcal{A}^q$ onto $\mathcal{E}$, i,e. $p(e+f) = e$. Then $\mathcal{E} = p\mathcal{A}^q$. Now,
\begin{eqnarray*}
\langle p(e_1+f_1),(e_2+f_2)\rangle & = & \langle e_1,e_2\rangle_E + \langle e_1,f_2\rangle_F\\
                                    & = & \langle e_1,e_2\rangle_E\\
                                    & = & \langle e_1,p(e_2+f_2)\rangle_E\\
                                    & = & \langle e_1+f_1,p(e_2+f_2)\rangle.
\end{eqnarray*}
which shows that $p$ is self-adjoint. Once we have a self-adjoint $p$, we can now restrict the {\it Hermitian} structure on $\mathcal{A}^q\,$ to $\,\mathcal{E}\,$ (recall proof of part $(a)$ of Lemma \ref{Herm-I}\,) 
and hence $\mathcal{E}$ has the induced {\it Hermitian} structure.
\end{proof}
Thus we see that any f.g.p $\mathcal{A}$-module has an induced {\it Hermitian} structure on it from a finitely generated free module.
Now we have the following definition.
\begin{definition}
Let $\mathcal{E}$ be a  Hermitian, f.g.p module over $\mathcal{A}$. A compatible connection on $\,\mathcal{E}\,$ is a $\, \mathbb{C}$-linear mapping
$\,\nabla : \mathcal{E} \longrightarrow \mathcal{E} \, \otimes _\mathcal{A} \Omega _{D}^1\,$ such that,
\begin{enumerate}
\item[(a)] $\nabla (\xi a) = (\nabla\xi)a + \xi \otimes da, \, \, \, \, \, \forall\, \xi \in \mathcal{E} , a \in \mathcal{A}$;
\item[(b)] $\langle \, \xi , \nabla \eta \, \rangle - \langle \, \nabla \xi , \eta \, \rangle = d\langle \, \xi , \eta \, \rangle_\mathcal{A}\, \, \, \, \, \, \, \forall\, \xi , \eta \in \mathcal{E}\, \, \, \,$(Compatibility).
\end{enumerate}
\end{definition}
The meaning of the last equality in $\Omega_D^1$ is, if $ \nabla(\xi ) = \sum\xi_j\otimes \omega_j $, with $\xi_j \in \mathcal{E}\, , \, \omega_j \in \Omega_D^1(\mathcal{A})$,
then $ \langle \nabla \xi, \eta\rangle = \sum \omega_j^* \langle\xi_j , \eta\rangle_\mathcal{A}$. Any f.g.p right module has a connection. An example of a compatible connection is the Grassmannian connection $\nabla_0$ on $\mathcal{E} 
= p\mathcal{A}^q$, given by $\nabla_0(\xi) = p d\xi$, where $d\xi= (d \xi_1, \ldots ,d\xi_q)$. This connection is compatible with the {\it Hermitian} structure,
\begin{eqnarray*}
\langle \xi,\eta \rangle_{\mathcal{A}} = \displaystyle\sum_{k=1}^q \xi_k^*\eta_k \, , \, \, \, \forall \, \xi,\eta \in p\mathcal{A}^q.
\end{eqnarray*}
Also, any two compatible connections can only differ by an element of $Hom_\mathcal{A}(\mathcal{E}\, ,\, \mathcal{E} \otimes_\mathcal{A} \Omega_D^1(\mathcal{A}))$. That is, the space of all compatible connections on 
$\,\mathcal{E}$, which we denote by $\widetilde C(\mathcal{E})$, is an affine space with associated vector
space $Hom_\mathcal{A}(\mathcal{E}\, ,\, \mathcal{E} \otimes_\mathcal{A} \Omega_D^1(\mathcal{A}))$.
The connection $\nabla$ extends to a unique linear map $\widetilde \nabla$ from
$\mathcal{E} \otimes \Omega_D^1$ to $\mathcal{E} \otimes \Omega_D^2$ such that,
\begin{eqnarray*}
\widetilde \nabla (\xi \otimes \omega) = (\nabla \xi)\omega + \xi \otimes \tilde d\omega, \, \, \, \forall \, \xi \in \mathcal{E}, \, \, \omega \in \Omega_D^1.
\end{eqnarray*}
It can be easily checked that $\widetilde\nabla$, defined above, satisfies the Leibniz rule, i,e.
\begin{eqnarray*}
\widetilde \nabla(\eta a) = \widetilde \nabla(\eta)a - \eta \tilde da \, , \, \, \forall \, a \in \mathcal{A},\eta \in \mathcal{E} \otimes \Omega_D^1\, . 
\end{eqnarray*}
A simple calculation shows that $\varTheta = \widetilde\nabla \circ \nabla$ is an element of $Hom_\mathcal{A}(\mathcal{E},\mathcal{E} \otimes_\mathcal{A} \Omega_D^2)$. Our next goal is to define an inner-product on 
$Hom_\mathcal{A}(\mathcal{E},\mathcal{E} \otimes_\mathcal{A} \Omega_D^2)$. In order to do so, recall that $\Omega_D^2 \cong \pi(\Omega^2)/\pi(dJ_0^{(1)})$. Let $\mathcal{H}_2$ be the Hilbert space completion of 
$\pi(\Omega^2)$ with the inner-product 
\begin{eqnarray*}
\langle T_1,T_2\rangle = Tr_\omega(T_1^*T_2|D|^{-d}),\, \forall \, T_1,T_2 \in \pi(\Omega^2). 
\end{eqnarray*}
Let $\widetilde{\mathcal{H}}_2$ be the Hilbert space completion of $\pi(dJ_0^{(1)})$ with the above inner-product. Clearly $\widetilde{\mathcal{H}}_2 \subseteq \mathcal{H}_2$. Let $P$ be the orthogonal projection of 
$\mathcal{H}_2$ onto the orthogonal complement of the subspace  $\pi(dJ_o^{(1)})$. Now define $\langle\, [T_1],[T_2]\,\rangle_{\Omega_D^2} = \langle PT_1,PT_2\rangle,\,$ for all $\, [T_j] \in \Omega_D^2$. 
This gives a well defined inner-product on $\Omega_D^2$. Viewing $\mathcal{E} = p\mathcal{A}^q$ we see that $Hom_\mathcal{A}(\mathcal{E},\mathcal{E} \otimes_\mathcal{A} \Omega_D^2) 
= Hom_\mathcal{A}(p\mathcal{A}^q,p\mathcal{A}^q \otimes_\mathcal{A} \Omega_D^2) \cong Hom_\mathcal{A}(pA^q,p(\Omega_D^2)^q)\,$, which is contained in $Hom_\mathcal{A}(\mathcal{A}^q,(\Omega_D^2)^q)$. 
Now for $\phi,\psi \in Hom_\mathcal{A}(\mathcal{E},\mathcal{E} \otimes_\mathcal{A} \Omega_D^2)$, define $\langle\langle \phi,\psi\rangle\rangle = \sum_k \langle \phi(p\widetilde{e_k}),\psi(p\widetilde{e_k})\rangle_{\Omega_D^2}$ 
where $\{\widetilde{e_1},\ldots,\widetilde{e_q}\}$ is the standard basis of $\mathcal{A}^q$. Finally, the Yang-Mills functional on $\widetilde C(\mathcal{E})$ is given by,
\begin{eqnarray}\label{main form}
\textit{YM}\,(\nabla) = \langle\langle\, \varTheta,\varTheta\, \rangle\rangle\, .
\end{eqnarray}

\section{Comparison between the two approaches}
In this section we work out the Yang-Mills action functional in the second formulation and show that this is same as the one coming from the first formulation.
Given a $C^*$-dynamical system $(\mathcal{A},G,\alpha)$ where $G$ is a connected Lie group with a $G$-invariant faithful trace $\tau$ on $\mathcal{A}$, we can consider the G.N.S Hilbert space 
$\widetilde{\mathcal{H}} = L^2(\mathcal{A},\tau)$. If {\it dimension} of the Lie group $G$ is $\,m$, letting $t = 2^{[m/2]}\,$, we know that there exist $m$ matrices in $M_t(\mathbb{C})$ denoted by 
$\gamma_1,\gamma_2,\cdots, \gamma_m\,$(called Clifford gamma matrices), such that, $\, \gamma_r\gamma_s + \gamma_s\gamma_r = 2\delta_{rs}\, ,\, r,s \in \{1,\ldots,m\}\, $, where $\delta_{rs}$ is the 
Kronecker delta function. In our case of non-commutative $n$ torus $\mathcal{A}_\varTheta$, the Lie group is $\, \mathbb{T}^n\,$ and hence we get $n$ Clifford gamma matrices $\gamma_1,\ldots ,\gamma_n\,$. 
We define $D := \sum_{j=1}^n \delta_j \otimes \gamma_j$ where $\delta_j = (-i)\widetilde\delta_j\, \,$(recall definition of $\widetilde\delta_j$ 
from \ref{induced actions}). Then $D$ becomes self-adjoint on $\mathcal{H} = \widetilde{\mathcal{H}} \otimes \mathbb{C}^N$ with domain $\mathcal{A}_\varTheta \otimes \mathbb{C}^N$, $N=2^{[n/2]}$. Moreover $|D|^{-n}$ lies in $\mathcal{L}^{(1,\infty)}$ with $Tr_\omega(|D|^{-n}) = 2N\pi^{n/2}/(n(2\pi)^n\Gamma(n/2))\,$(see \cite{5},Page $545$) 
and  $(\mathcal{A}_\varTheta,\mathcal{H},D)$ gives us a $(n,\infty)$-summable spectral triple. Following propositions determine the $\mathcal{A}_\varTheta$-bimodules $\Omega_D^1$ and $\Omega_D^2$ upto bimodule isomorphisms~:
\begin{proposition}
$\Omega_D^1 \cong \underbrace {\mathcal{A}_\varTheta \oplus \ldots \oplus \mathcal{A}_\varTheta}_{n\, \,times}$.
\end{proposition}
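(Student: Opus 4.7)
The plan is to identify $\Omega_D^1$ directly as $\pi(\Omega^1)$ (modulo the ``junk" subspace) and then show that the Clifford generators $\gamma_1,\ldots,\gamma_n$ provide an $\mathcal{A}_\varTheta$-basis for this space.

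First I would dispose of the junk. Recall that $\Omega_D^1 \cong \pi(\Omega^1)/\pi(dJ_0^{(0)})$, and $J_0^{(0)} = \ker(\pi|_\mathcal{A})$. Since $\widetilde{\mathcal H}$ is the GNS space of the faithful trace $\tau$, the representation of $\mathcal{A}_\varTheta$ on $\widetilde{\mathcal H}$ (and hence on $\mathcal{H} = \widetilde{\mathcal H}\otimes \mathbb{C}^N$) is faithful, so $J_0^{(0)} = 0$ and $\Omega_D^1 = \pi(\Omega^1)$.

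Next I would compute commutators explicitly. For $a \in \mathcal{A}_\varTheta$ acting as $a\otimes I$, one has $[D,a] = \sum_{j=1}^n \delta_j(a)\otimes \gamma_j$, so every element of $\pi(\Omega^1)$ is a finite sum of elements of the form $\sum_j a_0 \delta_j(a_1)\otimes \gamma_j$; in particular $\pi(\Omega^1) \subseteq \{\sum_j b_j \otimes \gamma_j : b_j \in \mathcal{A}_\varTheta\}$. For the reverse inclusion, use the unitary generators: from $\widetilde\delta_j(U_k) = i\delta_{jk}U_k$ and $\delta_j = -i\widetilde\delta_j$, one gets $\delta_j(U_k) = \delta_{jk}U_k$, so $[D,U_k] = U_k\otimes \gamma_k$ and therefore $U_k^*[D,U_k] = 1\otimes \gamma_k$. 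Multiplying by any $b \in \mathcal{A}_\varTheta$ on the left shows $b\otimes \gamma_k \in \pi(\Omega^1)$ for each $k$. Hence $\pi(\Omega^1) = \bigl\{\sum_k b_k\otimes \gamma_k : b_k \in \mathcal{A}_\varTheta\bigr\}$.

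The remaining step, which is the one requiring a little care, is to check that the assignment $\sum_k b_k\otimes \gamma_k \mapsto (b_1,\ldots,b_n)$ is well-defined, i.e.\ that the $\gamma_k$ are $\mathcal{A}_\varTheta$-linearly independent in this expression. If $\sum_k b_k\otimes \gamma_k = 0$ as an operator on $\widetilde{\mathcal H}\otimes \mathbb{C}^N$, then for every $\xi \in \widetilde{\mathcal H}$ one has $\sum_k (b_k \xi)\otimes (\gamma_k v) = 0$ for all $v \in \mathbb{C}^N$. Since $\gamma_1,\ldots,\gamma_n$ are linearly independent in $M_N(\mathbb{C})$, one can select $v$'s and a dual pairing on $\mathbb{C}^N$ extracting each $b_k\xi$ individually, forcing $b_k\xi = 0$ for all $\xi$, and hence $b_k = 0$ by faithfulness of the GNS representation. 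Thus the map is an $\mathcal{A}_\varTheta$-bimodule isomorphism onto $\mathcal{A}_\varTheta^n$, where the bimodule structures correspond componentwise since $\gamma_k$ commutes with the image of $\mathcal{A}_\varTheta$.

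The main (mild) obstacle is precisely this last independence check, because the $\gamma_k$ live on the auxiliary factor $\mathbb{C}^N$ where they obey nontrivial Clifford relations; one must ensure that the relations $\gamma_r\gamma_s + \gamma_s\gamma_r = 2\delta_{rs}$ do not produce unexpected identifications at the level $\pi(\Omega^1)$ of first-order forms (they do not, since the relations are quadratic in the $\gamma$'s and only become relevant for $\Omega_D^2$).
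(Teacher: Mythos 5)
Your proof is correct and follows essentially the same route as the paper: expand $\pi(a_0\,da_1)=\sum_j a_0\delta_j(a_1)\otimes\gamma_j$, use the linear independence of the $\gamma_j$ in $M_N(\mathbb{C})$ to identify $\pi(\Omega^1)$ with $\mathcal{A}_\varTheta^n$, and get surjectivity from $U_k^*[D,U_k]=1\otimes\gamma_k$. The only (harmless) differences are that you obtain surjectivity by direct left multiplication where the paper invokes simplicity of $\mathcal{A}_\varTheta$ together with the bimodule structure, and that you explicitly note the vanishing of the degree-one junk and the $\mathcal{A}_\varTheta$-linear independence, which the paper leaves implicit.
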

\begin{proof}
We know that $\Omega_D^1 \cong \pi(\Omega^1)$. Let $\omega \in \Omega^1$, so
$\omega = \sum_j \, a_jdb_j, \, a_j,b_j \in \mathcal{A}_\varTheta$. Then,
\begin{eqnarray*}
\pi(\omega) & = & \displaystyle\sum_j (a_j\otimes I)[D,b_j]\\
            & = & \displaystyle\sum_j \left(\displaystyle\sum_{l=1}^n a_j\delta_l(b_j)\otimes \gamma_l\right).
\end{eqnarray*}
Since $\{\gamma_1,\ldots,\gamma_n\} \subseteq M_N(\mathbb{C})$ is a linearly independent set, their linear span forms a $n$-dimensional vector space $\mathbb{C}^n$ where we identify $\gamma_l$ 
with $\alpha_l=(0,\ldots,1,\ldots,0) \in \mathbb{C}^n$ with $1$ in the $l$-th place. $\{\alpha_1,\ldots,\alpha_n\}$ is the canonical basis for $\mathbb{C}^n$. Hence we get that $\Omega_D^1 \subseteq 
\mathcal{A}_\varTheta\otimes\mathbb{C}^n$. For any $a \in \mathcal{A}_\varTheta$, we can write $a = aU_l^*U_l = aU_l^*\delta_l(U_l)$.  Now equality follows from simpleness of $\mathcal{A}_\varTheta$ together with the
fact that $\pi(\Omega^1)$ is $\mathcal{A}_\varTheta$-bimodule. 
\end{proof}
\begin{remark} \rm
Henceforth throughout this article $\{\sigma_1,\ldots,\sigma_n\}$ will denote the standard basis of $\mathcal{A}_{\varTheta}^n$ as free $\mathcal{A}_{\varTheta}$-bimodule where $\sigma_k 
= \underbrace {(0,\ldots,1,\ldots,0)}_{n\, tuple}$ with $1$ in the $k$-th place; whereas $\{\widetilde{e}_1,\ldots,\widetilde{e}_q\}$ will stand for the standard basis of $\mathcal{A}_{\varTheta}^q$ 
where $\widetilde{e}_l = \underbrace {(0,\ldots,1,\ldots,0)}_{q\, tuple}$ with $1$ in the $l$-th place. We will reserve this notation in the rest of this article. Under the identification in the 
above proposition, $\sigma_k$ is identified with $U_k^*\delta_k(U_k) \otimes \gamma_k$ in $\Omega_D^1$ for $k \in \{1,\ldots,n\}$.
\end{remark}
\begin{proposition}
$\Omega_D^2 \cong \underbrace{\mathcal{A}_\varTheta \oplus \ldots \ldots \oplus \mathcal{A}_\varTheta}_{n(n-1)/2\, \, times}$.
\end{proposition}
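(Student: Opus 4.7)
The goal is to compute $\Omega_D^2 = \pi(\Omega^2)/\pi(dJ_0^{(1)})$ explicitly as an $\mathcal{A}_\varTheta$-bimodule. The plan is to first determine $\pi(\Omega^2)$ inside $\mathcal{A}_\varTheta \otimes M_N(\mathbb{C})$, then prove the key lemma that $\pi(dJ_0^{(1)})$ is exactly the ``scalar'' summand $\mathcal{A}_\varTheta \otimes I$, and finally read off the quotient as $n(n-1)/2$ copies of $\mathcal{A}_\varTheta$ indexed by pairs $l<m$.

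For the first step, a general element $\omega = \sum_j a_j\, db_j\, dc_j \in \Omega^2$ maps to
\[
\pi(\omega) = \sum_j\sum_{l,m} a_j\,\delta_l(b_j)\delta_m(c_j)\otimes \gamma_l\gamma_m,
\]
and using the Clifford relation $\gamma_l\gamma_m=\delta_{lm}I+\tfrac12[\gamma_l,\gamma_m]$ this lies in $\mathcal{A}_\varTheta\otimes \mathrm{span}\{I,\,\gamma_l\gamma_m:l<m\}$. Conversely, since $\delta_k(U_l)=\delta_{kl}U_l$, a direct calculation gives $\pi\bigl(aU_l^*dU_l\cdot U_m^*dU_m\bigr)=a\otimes \gamma_l\gamma_m$ for all $a\in\mathcal{A}_\varTheta$ and all $l,m$, so every basis vector of this span is actually attained.

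The crucial step is the identity $\pi(dJ_0^{(1)})=\mathcal{A}_\varTheta\otimes I$. For the inclusion $\subseteq$: if $\omega=\sum_j a_j\,db_j\in J_0^{(1)}$ then $\sum_j a_j\delta_m(b_j)=0$ for each $m$; applying $\delta_l$ and using commutativity $[\delta_l,\delta_m]=0$ gives
\[
\sum_j \delta_l(a_j)\delta_m(b_j) \;=\; -\sum_j a_j\,\delta_l\delta_m(b_j),
\]
an expression visibly symmetric in $(l,m)$. Hence the antisymmetric part of $\pi(d\omega)=\sum_{j,l,m}\delta_l(a_j)\delta_m(b_j)\otimes \gamma_l\gamma_m$ vanishes, leaving only the diagonal contribution in $\mathcal{A}_\varTheta\otimes I$. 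For the reverse inclusion one exhibits explicit witnesses: for any fixed $l$ the element $\omega_l=(U_l^*)^2 d(U_l^2)-2U_l^*dU_l$ lies in $J_0^{(1)}$ (since $\delta_l(U_l^2)=2U_l^2$) and $\pi(d\omega_l)=-2I\otimes I$; then for arbitrary $a\in\mathcal{A}_\varTheta$, $a\omega_l\in J_0^{(1)}$ and $\pi(d(a\omega_l))=a\pi(d\omega_l)=-2a\otimes I$, covering all of $\mathcal{A}_\varTheta\otimes I$.

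Putting the two steps together produces a surjective bimodule map $\mathcal{A}_\varTheta^{n(n-1)/2}\twoheadrightarrow \Omega_D^2$ sending the standard basis vector indexed by $(l,m)$ with $l<m$ to the class of $I\otimes \gamma_l\gamma_m$; injectivity reduces to the linear independence of $\{I,\gamma_l\gamma_m:l<m\}$ in the Clifford representation $M_N(\mathbb{C})$, which is standard. The main obstacle is the inclusion $\pi(dJ_0^{(1)})\subseteq \mathcal{A}_\varTheta\otimes I$; it relies essentially on the commutativity of the derivations, which is the feature that makes the abelian Lie group $\mathbb{T}^n$ amenable to this analysis.
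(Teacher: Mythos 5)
Your proposal is correct and follows essentially the same route as the paper: identify $\pi(\Omega^2)$ inside $\mathcal{A}_\varTheta\otimes\mathrm{span}\{I,\gamma_l\gamma_m: l<m\}$, use commutativity of the derivations $\delta_l$ to show the antisymmetric components of $\pi(d\omega)$ vanish for $\omega\in J_0^{(1)}$ so that $\pi(dJ_0^{(1)})=\mathcal{A}_\varTheta\otimes I$, and read off the quotient. The only (harmless) difference is that you obtain surjectivity by exhibiting explicit witnesses for every $a\in\mathcal{A}_\varTheta$ (e.g.\ $a\omega_l$ with $\pi(\omega_l)=0$), whereas the paper produces single nontrivial elements and invokes simpleness of $\mathcal{A}_\varTheta$ together with the bimodule structure.
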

\begin{proof}
We know that $\Omega_D^2 \cong \pi(\Omega^2)/\pi(dJ_0^{(1)})$. Let $\omega \in \Omega^2$ and write $\omega = \sum_r a_rdb_rdc_r\,$,$\,$ where $a_r,b_r,c_r \in \mathcal{A}_\varTheta\,$. Then,
\begin{eqnarray*}
\pi(\omega) & =  & \displaystyle\sum_r (a_r\otimes I)[D,b_r][D,c_r]\\
            & =  & \displaystyle\sum_r \left(a_r\otimes I\right)            
                        \left(\displaystyle\sum_{j=1}^n \delta_j(b_r)\otimes \gamma_j\right) 
                        \left(\displaystyle\sum_{k=1}^n \delta_k(c_r)\otimes \gamma_k\right)\\
            & =  & \displaystyle\sum_r \left(\displaystyle\sum_{j=1}^n 
                        a_r\delta_j(b_r)\otimes \gamma_j\right)\left(\displaystyle\sum_{k=1}^n  
                        \delta_k(c_r)\otimes \gamma_k\right)\\
            & =  & \displaystyle\sum_r \left(\left(\displaystyle\sum_{j=1}^n  
                        a_r\delta_j(b_r)\delta_j(c_r)\otimes I\right) + \displaystyle\sum_{p<q} \left(a_r\delta_p(b_r)\delta_q(c_r) - a_r\delta_q(b_r)\delta_p(c_r)\right)\otimes \gamma_p \gamma_q\right)
\end{eqnarray*}

Since we know that, $\gamma_l^2 = I$ and $\gamma_l \gamma_m = - \gamma_m \gamma_l$ for $l \neq m$, $a_r\delta_p(b_r)\delta_q(c_r)\otimes \gamma_p \gamma_q + a_r\delta_q(b_r)\delta_p(c_r)\otimes 
\gamma_q \gamma_p = (a_r\delta_p(b_r)\delta_q(c_r) - a_r\delta_q(b_r)\delta_p(c_r))\otimes \gamma_p \gamma_q$. Now $\gamma_l\gamma_m$ is independent with all $\gamma_p\gamma_q$ if $l,m \notin \{p,q\}$. 
Hence, $\pi(\Omega^2) \subseteq \displaystyle\bigoplus_{l=1}^{1+n(n-1)/2} \mathcal{A}_\varTheta^{(l)} ,\, $ where $\mathcal{A}_\varTheta^{(l)} = \mathcal{A}_\varTheta , \, \, \forall \, l$ because total 
number of the elements $(a_r\delta_p(b_r)\delta_q(c_r)\otimes \gamma_p \gamma_q - a_r\delta_q(b_r)\delta_p(c_r)\otimes \gamma_p \gamma_q)$ is $n(n-1)/2$. To show equality we use simpleness of $\mathcal{A}_\varTheta$. 
For that purpose we take any non-zero $a \in \mathcal{A}_\varTheta$ and take $b=U_1\, ,\, c=U_1^*$. Then $adU_1 dU_1^* \in \Omega^2$ and $\pi(adU_1 dU_1^*) 
= -a\otimes I$ is a non-zero element of $\pi(\Omega^2)$. Similarly for each $p,q$ we consider $U_q^*U_p^*d(U_p)d(U_q) \in \Omega^2$. Then $\pi(U_q^*U_p^*d(U_p)d(U_q)) = 1\otimes \gamma_p \gamma_q$. Hence for each
$p,q$ it is possible to choose nontrivial element of $\Omega^2$ s.t. the coefficient $\displaystyle\sum_r \left(a_r\delta_p(b_r)\delta_q(c_r) - a_r\delta_q(b_r)\delta_p(c_r)\right)$ of $\gamma_p\gamma_q$ is nonzero.
Now using the fact that $\pi(\Omega^2)$ is $\mathcal{A}_\varTheta$ bimodule and $\mathcal{A}_\varTheta$ is simple we get eqality.

Now we calculate $\pi(dJ_0^{(1)})$. We have $\omega \in J_0^{(1)}$ implies $\omega =\sum_s a_sdb_s$ where $a_s,b_s \in \mathcal{A}_\varTheta$, such that $\displaystyle\sum_s (a_s\otimes I)[D,b_s] = 0$. 
So we get, $\displaystyle\sum_s (a_s\otimes I)\left(\displaystyle\sum_{j=1}^n \delta_j(b_s)\otimes\gamma_j\right) = 0$, that is, $\displaystyle\sum_{j=1}^n \left(\displaystyle\sum_s a_s\delta_j(b_s)\right)\otimes \gamma_j = 0$. 
But,$\, \gamma_1,\ldots,\gamma_n$ being linearly independent we get,
\begin{eqnarray}\label{calc}
\displaystyle\sum_s a_s\delta_j(b_s)\otimes\gamma_j = 0,\, \, \, \, \forall j=1,\ldots,n.
\end{eqnarray}
Now $d\omega = \displaystyle\sum_s da_sdb_s$. So,
\medskip
\begin{eqnarray*}
\pi(d\omega) & = & \displaystyle\sum_s [D,a_s][D,b_s]\\
             & = & \displaystyle\sum_s \left(\displaystyle\sum_{j=1}^n \delta_j(a_s)\otimes\gamma_j\right)\left(\displaystyle\sum_{k=1}^n \delta_k(b_s)\otimes\gamma_k\right)\\
             & = & \displaystyle\sum_s \left((\displaystyle\sum_{j=1}^n \delta_j(a_s)\delta_j(b_s)\otimes I) +\ldots + (\delta_p(a_s)\delta_q(b_s) - \delta_q(a_s)\delta_p(b_s))\otimes \gamma_p\gamma_q +\ldots\right)
\end{eqnarray*}
Now from eqn. (\,\ref{calc}\,) we get,
\begin{eqnarray*}
\displaystyle\sum_s\delta_p(a_s)\delta_q(b_s)\otimes\gamma_p\gamma_q = -\displaystyle\sum_s a_s\delta_p\delta_q(b_s)\otimes\gamma_p\gamma_q
\end{eqnarray*}
and,
\begin{eqnarray*}
\displaystyle\sum_s\delta_q(a_s)\delta_p(b_s)\otimes\gamma_q\gamma_p = -\displaystyle\sum_s a_s\delta_q\delta_p(b_s)\otimes\gamma_q\gamma_p
\end{eqnarray*}
Hence,
\begin{eqnarray*}
(\delta_p(a_s)\delta_q(b_s) - \delta_q(a_s)\delta_p(b_s))\otimes \gamma_p\gamma_q  & = & (-a_s\delta_p\delta_q(b_s)+a_s\delta_q\delta_p(b_s))\otimes \gamma_p\gamma_q\\
                  & = & 0
\end{eqnarray*}
because, $\, \delta_p\delta_q = \delta_q\delta_p\, , \, \, \forall p,q\in \{1,\ldots,n\}$. Hence, $\pi(dJ_0^{(1)}) \subseteq \mathcal{A}_\varTheta$ and to show the equality we produce a non-trivial element in 
$\pi(dJ_0^{(1)})$ and again use the simpleness of $\mathcal{A}_\varTheta$. Consider $\omega = a(U_1^*dU_1 - 1/2 \times U_1^{-2}d(U_1^2)) \in \Omega^1$ with $a \neq 0$. Then we get $\pi(\omega) =0$ but $\pi(d\omega) 
= a\otimes I \neq 0\, \,$(which also shows non-triviality of $\omega$). Hence we conclude $\pi(dJ_0^{(1)}) \cong \mathcal{A}_\varTheta$. 
\end{proof}
Now we want to determine the differential $\tilde{d} : \pi(\mathcal{A}_\varTheta) \longrightarrow \Omega_D^1\,$ so that, $\tilde{d}(\pi(a)) = \pi(da) , \, \, \forall a\in \mathcal{A}_\varTheta$. 
\begin{lemma}
$\tilde{d} : \pi(\mathcal{A}_\varTheta) \longrightarrow \Omega_D^1$ is given by, $\pi(a) \longmapsto (\delta_1(a),\ldots,\delta_n(a))$. 
\end{lemma}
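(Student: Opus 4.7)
The plan is to unwind the definition of $\tilde{d}$ and apply the identification of $\Omega_D^1$ with $\mathcal{A}_\varTheta^n$ established in the previous proposition. By construction of the quotient differential graded algebra $\Omega_D^\bullet$, the differential $\tilde{d}$ on degree zero is simply $\tilde{d}(\pi(a)) = \pi(da)$, which makes sense unambiguously because $\pi$ restricted to $\Omega^0 = \mathcal{A}_\varTheta$ is injective and there are no junk terms to mod out in degree $-1$. Hence I only need to compute $\pi(da) = [D,a]$ and rewrite the result in the coordinates coming from the preceding proposition.

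First I would insert the explicit form of the Dirac operator $D = \sum_{j=1}^n \delta_j \otimes \gamma_j$. Since the gamma matrices are scalar (constant on the $\mathcal{A}_\varTheta$-factor), they commute with $a \otimes I$, and each $\delta_j$ is a derivation; therefore
\begin{equation*}
[D,a] \;=\; \sum_{j=1}^n [\delta_j \otimes \gamma_j,\, a\otimes I] \;=\; \sum_{j=1}^n \delta_j(a)\otimes \gamma_j.
\end{equation*}

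Next I would translate this into the coordinates supplied by the identification $\Omega_D^1 \cong \mathcal{A}_\varTheta^n$. Recall from the remark following the previous proposition that $\sigma_k$ corresponds to $U_k^*\delta_k(U_k)\otimes \gamma_k$. Using $\widetilde{\delta_j}(U_k) = i\,\delta_{jk}U_k$ and $\delta_j = -i\widetilde{\delta_j}$, one sees $\delta_k(U_k) = U_k$, so $U_k^*\delta_k(U_k) = 1$ and therefore $\sigma_k$ is simply $1\otimes \gamma_k$. Writing any element $\sum_j b_j\otimes \gamma_j$ as $\sum_j b_j\,\sigma_j$, the tuple of coefficients $(b_1,\ldots,b_n)$ is exactly its image in $\mathcal{A}_\varTheta^n$. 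Applying this to $[D,a] = \sum_j \delta_j(a)\otimes\gamma_j$ yields the claimed formula $\tilde{d}(\pi(a)) = (\delta_1(a),\ldots,\delta_n(a))$.

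There is no real obstacle here; the statement is essentially a bookkeeping consequence of the two ingredients already in place, namely the definition of $D$ and the explicit basis identification for $\Omega_D^1$. The only point that deserves a brief sentence of justification is well-definedness: since $\tilde{d}$ is induced from the universal differential $d$ on the universal DGA and the zero-degree part of the junk ideal $J^\bullet$ vanishes, no ambiguity modulo $\pi(dJ_0^{(-1)})$ arises, and the right-hand side $(\delta_1(a),\ldots,\delta_n(a))$ depends only on $a$ (equivalently only on $\pi(a)$, since $\pi$ is faithful on $\mathcal{A}_\varTheta$).
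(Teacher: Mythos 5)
Your proposal is correct and follows essentially the same route as the paper: compute $\pi(da)=[D,a]=\sum_j\delta_j(a)\otimes\gamma_j$ and read off the coordinates under the identification $\Omega_D^1\cong\mathcal{A}_\varTheta^n$. Your extra observations — that $\sigma_k=U_k^*\delta_k(U_k)\otimes\gamma_k=1\otimes\gamma_k$ and the remark on well-definedness — only make explicit what the paper leaves implicit.
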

\begin{proof}
Pick any element $\pi(a) \in \pi(\mathcal{A}_\varTheta)$. Then $da \in \Omega^1$ and hence $\pi(da) = [D,a] = \displaystyle\sum_{j=1}^n \delta_j(a)\otimes\gamma_j$. 
This is an element in $\Omega_D^1$, which is isomorphic to $\mathcal{A}_\varTheta^n$ and under this isomorphism, $\displaystyle\sum_{j=1}^n \delta_j(a)\otimes\gamma_j\, $ goes to $(\delta_1(a),\ldots,\delta_n(a))$ 
in $\mathcal{A}_\varTheta^n$. Hence the above definition of $\tilde{d}$ is justified.
\end{proof}
Next we want to determine the differential $\tilde{d} : \Omega_D^1 \longrightarrow \Omega_D^2\,$ so that, $\tilde{d}(\pi(\omega)) = \pi(d\omega),\, \, \forall \, \omega \in \Omega^1$.
\begin{lemma}
$\tilde{d} : \Omega_D^1 \longrightarrow \Omega_D^2\, $ is given by,
\begin{eqnarray*}
(0,\ldots,a,\ldots,0) \longmapsto ((\delta_p(aU_j^*)\delta_q(U_j)-(\delta_q(aU_j^*)\delta_p(U_j)))_{1\leq  p<q\leq n}
\end{eqnarray*}
for $a$ in the $j$-th place.
\end{lemma}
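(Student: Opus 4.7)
The plan is to lift to the universal differential algebra, apply the standard formula for $\pi \circ d$, and then reduce modulo $\pi(dJ_0^{(1)})$. First I would identify the element $(0,\ldots,a,\ldots,0)$, with $a$ in the $j$-th slot, with a specific representative in $\pi(\Omega^1) = \Omega_D^1$. By the remark following the previous proposition, $\sigma_j$ corresponds to $U_j^*\delta_j(U_j)\otimes \gamma_j = 1\otimes \gamma_j$, so $(0,\ldots,a,\ldots,0)$ corresponds to $a\otimes \gamma_j$. A convenient lift to $\Omega^1$ is $\omega = aU_j^*\,dU_j$, for which $\pi(\omega) = aU_j^*[D,U_j] = (aU_j^*U_j)\otimes \gamma_j = a\otimes \gamma_j$.

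Next I would compute $\tilde{d}$ via the universal recipe $\tilde{d}(\pi(\omega)) = \pi(d\omega)$ modulo $\pi(dJ_0^{(1)})$. Using the graded Leibniz rule together with $d^2=0$, we have $d\omega = d(aU_j^*)\,dU_j$. Applying $\pi$ and expanding $[D,x] = \sum_p \delta_p(x)\otimes \gamma_p$ yields
\begin{eqnarray*}
\pi(d\omega) \;=\; [D,\,aU_j^*]\,[D,\,U_j] \;=\; \sum_{p,q=1}^n \delta_p(aU_j^*)\,\delta_q(U_j)\,\otimes\,\gamma_p\gamma_q .
\end{eqnarray*}
The Clifford relations $\gamma_p^2 = I$ and $\gamma_p\gamma_q = -\gamma_q\gamma_p$ for $p\neq q$ split this into a diagonal part $\sum_p \delta_p(aU_j^*)\delta_p(U_j)\otimes I$ and an off-diagonal part $\sum_{p<q}\bigl(\delta_p(aU_j^*)\delta_q(U_j) - \delta_q(aU_j^*)\delta_p(U_j)\bigr)\otimes \gamma_p\gamma_q$.

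Finally I would project into $\Omega_D^2 = \pi(\Omega^2)/\pi(dJ_0^{(1)})$. The proof of the previous proposition has already identified $\pi(dJ_0^{(1)})$ with the $\mathcal{A}_\varTheta\otimes I$ summand of $\pi(\Omega^2)$, so passing to the quotient kills the diagonal part above. Under the explicit isomorphism $\Omega_D^2 \cong \mathcal{A}_\varTheta^{n(n-1)/2}$ indexed by the ordered pairs $(p,q)$ with $p<q$, what remains is exactly $(\delta_p(aU_j^*)\delta_q(U_j) - \delta_q(aU_j^*)\delta_p(U_j))_{1\leq p<q\leq n}$, which is the claimed formula.

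No step is genuinely difficult; the only real verification is that the answer is independent of the lift $\omega$, but this is automatic from the definition of $\tilde{d}$ on $\Omega_D^{\bullet}$, since two lifts differ by an element of $J_0^{(1)}$ whose image under $\pi\circ d$ lies in $\pi(dJ_0^{(1)})$ by definition. The bulk of the work is thus the routine Clifford bookkeeping in the middle step.
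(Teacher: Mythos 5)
Your proposal is correct and follows essentially the same route as the paper: lift $(0,\ldots,a,\ldots,0)$ to $aU_j^*\,dU_j$, compute $\pi(d(aU_j^*)\,dU_j)=[D,aU_j^*][D,U_j]$, split via the Clifford relations into a scalar part and the $\gamma_p\gamma_q$ part, and read off the answer under the identification $\Omega_D^2\cong\mathcal{A}_\varTheta^{n(n-1)/2}$. Your added remarks on the diagonal part being absorbed by $\pi(dJ_0^{(1)})$ and on independence of the lift are correct and only make explicit what the paper leaves implicit.
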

\begin{proof}
For $(0,\ldots,a,\ldots,0) \in \Omega_D^1\,$ with $a$ in the $j$-th place, we have $\, aU_j^*dU_j \in \Omega^1$, such that $\pi(aU_j^*dU_j)$ is identified with $(0,\ldots,a,\ldots,0)$. 
Now, $d(aU_j^*dU_j) = d(aU_j^*)dU_j\, $, an element of $\Omega^2$. Now,
\begin{eqnarray*}
\pi(d(aU_j^*)dU_j) & = & [D,aU_j^*][D,U_j]\\
                   & = & \left(\displaystyle\sum_{l=1}^n \delta_l(aU_j^*)\otimes \gamma_l\right)\left(\displaystyle\sum_{k=1}^n \delta_k(U_j)\otimes \gamma_k\right)\\
                   & = & \displaystyle\sum_{l=1}^n \delta_l(aU_j^*)\delta_l(U_j) \otimes I + \displaystyle\sum_{p<q}\left(\delta_p(aU_j^*)\delta_q(U_j) - \delta_q(aU_j^*)\delta_p(U_j)\right)\otimes \gamma_p\gamma_q.
\end{eqnarray*}
Under the isomorphism $\Omega_D^2 \cong \mathcal{A}_\varTheta^{n(n-1)/2}$, $\displaystyle\sum_{p<q}(\delta_p(aU_j^*)\delta_q(U_j) - \delta_q(aU_j^*)\delta_p(U_j))\otimes \gamma_p\gamma_q\, $ 
goes to the required point in $\mathcal{A}_\varTheta^{n(n-1)/2}\, \, $.
\end{proof}
Finally the product map is recognized by the following lemma.
\begin{lemma}\label{product map}
The product map $\, \widetilde\prod : \Omega_D^1 \times \Omega_D^1 \longrightarrow \Omega_D^2\,$ is given by,
\begin{center}
$(a_1,\ldots,a_n).(b_1,\ldots,b_n) := ((a_pb_q - a_qb_p))_{1\leq p<q\leq n}$
\end{center}
\end{lemma}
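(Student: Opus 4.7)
The plan is to reduce the statement to a direct computation in $\pi(\Omega^2)$ using the identifications already established in the two preceding propositions, and then pass to the quotient by $\pi(dJ_0^{(1)})$.

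First I would fix concrete representatives of arbitrary elements of $\Omega_D^1$. Recall from the first proposition (and the ensuing remark) that, under $\Omega_D^1 \cong \mathcal{A}_\varTheta^n$, the basis vector $\sigma_j$ corresponds to the class of $U_j^* dU_j$. A direct check using $\widetilde{\delta_j}(U_j) = iU_j$ and $\delta_j = -i\widetilde{\delta_j}$ gives $\delta_j(U_j) = U_j$, hence $\pi(U_j^* dU_j) = U_j^*\delta_j(U_j)\otimes \gamma_j = 1 \otimes \gamma_j$. More generally, for $(a_1,\ldots,a_n) \in \mathcal{A}_\varTheta^n$ one may take $\omega_a = \sum_j a_j U_j^* dU_j \in \Omega^1$, and obtain $\pi(\omega_a) = \sum_j a_j \otimes \gamma_j$. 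The same representative is chosen for $(b_1,\ldots,b_n)$, namely $\omega_b = \sum_k b_k U_k^* dU_k$ with $\pi(\omega_b) = \sum_k b_k \otimes \gamma_k$.

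Next I would compute the product in $\Omega_D^2$ as the class of $\pi(\omega_a \omega_b) = \pi(\omega_a)\pi(\omega_b)$:
\begin{eqnarray*}
\pi(\omega_a)\pi(\omega_b) = \sum_{j,k} a_j b_k \otimes \gamma_j \gamma_k
 = \sum_{j} a_j b_j \otimes I + \sum_{p<q}(a_p b_q - a_q b_p) \otimes \gamma_p \gamma_q,
\end{eqnarray*}
where the last equality uses $\gamma_j^2 = I$ and $\gamma_p \gamma_q = -\gamma_q \gamma_p$ for $p\neq q$ to collect the off-diagonal terms. The remaining step is to argue that the first summand is killed in the quotient $\Omega_D^2 = \pi(\Omega^2)/\pi(dJ_0^{(1)})$. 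This is immediate from the proof of the $\Omega_D^2$ proposition, which identified $\pi(dJ_0^{(1)})$ precisely with the copy of $\mathcal{A}_\varTheta \otimes I$ sitting inside $\pi(\Omega^2)$. Consequently, the class of $\pi(\omega_a \omega_b)$ in $\Omega_D^2$ equals the class of $\sum_{p<q}(a_p b_q - a_q b_p)\otimes \gamma_p\gamma_q$, which under the isomorphism $\Omega_D^2 \cong \mathcal{A}_\varTheta^{n(n-1)/2}$ corresponds to $((a_p b_q - a_q b_p))_{1 \leq p<q\leq n}$.

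There is no real obstacle: the calculation is mechanical once the identifications are in place. The only thing that requires a moment of care is the choice of the representative $\omega_a = \sum_j a_j U_j^* dU_j$, since the map $(a_1,\ldots,a_n) \mapsto \omega_a$ is only a section of $\pi$ and not canonical. This is harmless because the product in $\Omega_D^\bullet$ is well-defined on classes, so any other representative of $(a_1,\ldots,a_n)$ (differing from $\omega_a$ by an element of $J^{(1)}$) yields the same class in $\Omega_D^2$. With this remark the computation above gives the desired formula.
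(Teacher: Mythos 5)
Your proposal is correct, and its overall strategy is the paper's: represent $(a_1,\ldots,a_n)$ by $\sum_j a_jU_j^*\,dU_j$, compute the image of the product in $\pi(\Omega^2)$, and discard the component along $\mathcal{A}_\varTheta\otimes I$ because that is exactly $\pi(dJ_0^{(1)})$. Where you genuinely diverge is in how the image is computed. The paper first rewrites $\omega\omega'$ inside the universal calculus $\Omega^2$ in standard form $\sum a\,db\,dc$ via the Leibniz rule and only then applies $\pi$ term by term, which leads to a fairly long manipulation with the derivations (using $\delta_p(U_j)=\delta_{pj}U_j$, $\delta_p(U_q^*)=-\delta_{pq}U_q^*$, and Leibniz again) before the coefficient $a_pb_q-a_qb_p$ of $\gamma_p\gamma_q$ emerges. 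You instead use that $\pi$ is an algebra homomorphism on $\Omega^\bullet(\mathcal{A})$ together with $\pi(U_j^*\,dU_j)=1\otimes\gamma_j$, so the whole computation collapses to multiplying $\bigl(\sum_j a_j\otimes\gamma_j\bigr)\bigl(\sum_k b_k\otimes\gamma_k\bigr)$ in $\mathcal{B}(\mathcal{H})$ and letting the Clifford relations sort the terms. This buys brevity and also makes explicit two points the paper leaves tacit: that the $\otimes I$ terms are dropped precisely because $\pi(dJ_0^{(1)})=\mathcal{A}_\varTheta\otimes I$ (established in the proposition computing $\Omega_D^2$), and that the answer is independent of the chosen section $(a_1,\ldots,a_n)\mapsto\sum_j a_jU_j^*\,dU_j$ since $J^\bullet$ is a two-sided graded ideal, so the product descends to classes; the paper's in-$\Omega^2$ computation reaches the same endpoint without ever invoking multiplicativity of $\pi$ on products of one-forms, but since $\pi$ is by definition a $*$-representation of the whole graded algebra, your shortcut is fully justified.
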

\begin{proof}
We have a product $\prod : \Omega^1 \times \Omega^1 \longrightarrow \Omega^2\,$ given by $\prod(a_1da_2,b_1db_2) = a_1da_2b_1db_2 = a_1d(a_2b_1)db_2 - a_1a_2db_1db_2\,$. Choose two elements 
$(a_1,\ldots,a_n)$ and $(b_1,\ldots,b_n)$ in $\Omega_D^1$. We have seen previously that $\pi(\displaystyle\sum_{m=1}^n a_mU_m^*d(U_m))$ in $\pi(\Omega^1)$ is identified with $(a_1,\ldots,a_n)$. 
Similarly for $b_m$ inplace of $a_m$. Let $\omega = \displaystyle\sum_{m=1}^n a_mU_m^*d(U_m)$ and $\omega^\prime = \displaystyle\sum_{m=1}^n b_mU_m^*d(U_m) $. Now,
\begin{eqnarray*}
\prod(\omega,\omega^\prime) & = & \left(\displaystyle\sum_{m=1}^n a_mU_m^*d(U_m)\right)\left(\displaystyle\sum_{j=1}^n b_jU_j^*d(U_j)\right)\\
                            & = & \displaystyle\sum_{m,j=1}^n a_mU_m^*d(U_m)b_jU_j^*d(U_j)\\
                            & = & \displaystyle\sum_{m,j=1}^n \left(a_mU_m^*d(U_mb_jU_j^*)d(U_j)- a_md(b_jU_j^*)d(U_j)\right)
\end{eqnarray*}
It is an element of $\Omega^2$. Applying $\pi$ on it we get
\begin{eqnarray*}
\pi(\prod(\omega,\omega^\prime)) & = & \displaystyle\sum_{m,j=1}^n \left(a_mU_m^* [D,U_mb_jU_j^*][D,U_j] - a_m[D,b_jU_j^*][D,U_j]\right)\\
                                 & = & \displaystyle\sum_{m,j=1}^n (a_mU_m^*(\displaystyle\sum_{k=1}^n \delta_k(U_mb_jU_j^*)\otimes \gamma_k)(\displaystyle\sum_{l=1}^n \delta_l(U_j)\otimes \gamma_l)\\
                                 &   & - a_m(\displaystyle\sum_{r=1}^n \delta_r(b_jU_j^*)\otimes \gamma_r)(\displaystyle\sum_{s=1}^n \delta_s(U_j)\otimes \gamma_s))\\
                                 & = & \displaystyle\sum_{p<q}(\displaystyle\sum_{m,j=1}^na_mU_m^*\delta_p(U_mb_jU_j^*)\delta_q(U_j)-\displaystyle\sum_{m,j=1}^na_mU_m^*\delta_q(U_mb_jU_j^*)\delta_p(U_j)\\
                                 &   & - \displaystyle\sum_{m,j=1}^na_m\delta_p(b_jU_j^*)\delta_q(U_j) + \displaystyle\sum_{m,j=1}^n a_m\delta_q(b_jU_j^*)\delta_p(U_j))\otimes \gamma_p\gamma_q\\                                
\end{eqnarray*}
For each $p$ and $q$,
\begin{eqnarray*}
\displaystyle\sum_{p<q} (\displaystyle\sum_{m,j=1}^n (a_mU_m^*\delta_p(U_mb_jU_j^*)\delta_q(U_j)-\displaystyle\sum_{m,j=1}^n a_mU_m^*\delta_q(U_mb_jU_j^*)\delta_p(U_j)\\
- \displaystyle\sum_{m,j=1}^n a_m\delta_p(b_jU_j^*)\delta_q(U_j) + \displaystyle\sum_{m,j=1}^n a_m\delta_q(b_jU_j^*)\delta_p(U_j))
\end{eqnarray*}
\begin{eqnarray*}
& = & \displaystyle\sum_{m=1}^n (a_mU_m^*\delta_p(U_mb_qU_q^*)U_q - a_m\delta_p(b_qU_q^*)U_q\\
&   & + a_m\delta_q(b_pU_p^*)U_p -a_mU_m^*\delta_q(U_mb_pU_p^*)U_p)\\
& = & \displaystyle\sum_{m=1}^n (a_mU_m^*\delta_p(U_mb_q) - a_m\delta_p(b_q)\\
&   & + a_m\delta_q(b_p) - a_mU_m^*\delta_q(U_mb_p))\\
& = & \displaystyle\sum_{m=1}^n (a_mU_m^*\delta_p(U_m)b_q - a_mU_m^*\delta_q(U_m)b_p)\\
& = & a_pb_q - a_qb_p.
\end{eqnarray*}
Hence for $(a_1,\dots,a_n),(b_1,\dots,b_n) \in \Omega_D^1$, we get $\widetilde\prod((a_1,\dots,a_n),(b_1,\dots,b_n)) = ((a_pb_q - a_qb_p))_{1\leq p<q\leq n}\,$.
\end{proof}
It can be easily checked that both the $\tilde d$ , defined above, are derivations. We first prove the following lemmas which will help us in the computation.
\begin{lemma}\label{D-trace}
The canonical trace $\tau$ on $\mathcal{A}_\varTheta$ equals $1/Tr_\omega(|D|^{-n}) \int $ where $Tr_\omega$ denotes Dixmier trace and $\int a := Tr_\omega((a\otimes I)|D|^{-n})\, $
for all $\, a \in \mathcal{A}_\varTheta$.
\end{lemma}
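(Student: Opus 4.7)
The plan is to exploit the uniqueness of the tracial state on $\mathcal{A}_\varTheta$, which holds under the standing hypothesis that $\Lambda_\varTheta + \mathbb{Z}^n$ is dense in $\mathbb{R}^n$. Define the functional $\varphi : \mathcal{A}_\varTheta \longrightarrow \mathbb{C}$ by $\varphi(a) = Tr_\omega((a \otimes I)|D|^{-n})$. If I can show that $\varphi$ is a tracial linear functional, then uniqueness of $\tau$ forces $\varphi = c \cdot \tau$ for some $c \in \mathbb{C}$, and evaluating at $a = 1$ gives $c = Tr_\omega(|D|^{-n})$, which is precisely the claim.

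The main task is to prove traciality of $\varphi$. Using $\gamma_j\gamma_k + \gamma_k\gamma_j = 2\delta_{jk}$ (Kronecker) together with $[\delta_j, \delta_k] = 0$, one computes $D^2 = \sum_j \delta_j^2 \otimes I$. Hence, if $\{v_1, \ldots, v_N\}$ is any orthonormal basis of $\mathbb{C}^N$, then $\{U^{\mathbf{r}} \otimes v_k : \mathbf{r} \in \mathbb{Z}^n,\, 1 \leq k \leq N\}$ is an orthonormal basis of $\mathcal{H}$ simultaneously diagonalising $|D|$, with $|D|(U^{\mathbf{r}} \otimes v_k) = |\mathbf{r}|\, U^{\mathbf{r}} \otimes v_k$. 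In particular $|D|^{-n}$ (defined to be zero on the finite-dimensional kernel of $D$, a compact perturbation invisible to $Tr_\omega$) is diagonal with eigenvalues $|\mathbf{r}|^{-n}$.

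Since $Tr_\omega$ is a trace on $\mathcal{L}^{(1,\infty)}$, the identity $\varphi(ab) = \varphi(ba)$ reduces to $Tr_\omega\bigl((b \otimes I)\,[\,|D|^{-n}, a \otimes I\,]\bigr) = 0$, which holds provided the commutator $[|D|^{-n}, a \otimes I]$ lies in the ordinary trace class $\mathcal{L}^1$, since trace class operators have vanishing Dixmier trace. For $a = U^{\mathbf{r}}$ the commutator acts as a weighted shift, sending $U^{\mathbf{s}} \otimes v_k$ to $(|\mathbf{r} + \mathbf{s}|^{-n} - |\mathbf{s}|^{-n})$ times a unit vector proportional to $U^{\mathbf{r}+\mathbf{s}} \otimes v_k$; a mean-value estimate yields $|\mathbf{r} + \mathbf{s}|^{-n} - |\mathbf{s}|^{-n} = O(|\mathbf{s}|^{-n-1})$, and since $\sum_{\mathbf{s} \in \mathbb{Z}^n,\, \mathbf{s} \neq 0} |\mathbf{s}|^{-(n+1)}$ converges, the commutator is trace-class. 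For a general $a = \sum a_{\mathbf{r}} U^{\mathbf{r}} \in \mathcal{A}_\varTheta$, the Schwartz-class decay of $(a_{\mathbf{r}})$ dominates any polynomial growth in $\mathbf{r}$ of $\|[|D|^{-n}, U^{\mathbf{r}} \otimes I]\|_1$, giving $[|D|^{-n}, a \otimes I] \in \mathcal{L}^1$.

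With traciality in hand, uniqueness of $\tau$ delivers $\varphi = Tr_\omega(|D|^{-n}) \cdot \tau$, completing the proof. The main obstacle is the trace-class estimate on the commutator of $|D|^{-n}$ with generators; everything else is a formal consequence of simpleness of $\mathcal{A}_\varTheta$ and the standard cyclicity of the Dixmier trace.
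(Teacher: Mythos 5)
Your proof is correct, but it takes a genuinely different route from the paper's. The paper never touches traciality-via-commutators: it shows instead that $a \mapsto Tr_\omega((a\otimes I)|D|^{-n})$ is $G$-invariant, by implementing the torus action through unitaries $U_{\textbf{g}}$ on $L^2(\mathcal{A}_\varTheta,\tau)$, checking that $D$ (hence $|D|^{-n}$) commutes with $U_{\textbf{g}}\otimes I$, and then invoking uniqueness of the $G$-invariant trace; no estimate on $[|D|^{-n}, a\otimes I]$ is needed, so the argument is entirely soft. You instead prove that $\varphi(a)=Tr_\omega((a\otimes I)|D|^{-n})$ is tracial by showing $[|D|^{-n}, U^{\mathbf{r}}\otimes I]$ is a weighted shift with singular values $\bigl| |\mathbf{r}+\mathbf{s}|^{-n}-|\mathbf{s}|^{-n}\bigr| = O_{\mathbf{r}}(|\mathbf{s}|^{-n-1})$, hence trace class with trace norm of polynomial growth in $\mathbf{r}$, which the Schwartz decay of the coefficients absorbs; this is a correct and complete computation (the diagonalization $|D|(U^{\mathbf{r}}\otimes v)=|\mathbf{r}|\,U^{\mathbf{r}}\otimes v$ and the vanishing of $Tr_\omega$ on $\mathcal{L}^1$ are both right). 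Two small points you should make explicit: uniqueness of the tracial state only pins down \emph{positive} (or at least bounded) tracial functionals, so note that $\varphi$ is positive and bounded, e.g.\ $\varphi(a^*a)=Tr_\omega\bigl((a\otimes I)^*(a\otimes I)|D|^{-n}\bigr)=Tr_\omega\bigl((a\otimes I)|D|^{-n}(a\otimes I)^*\bigr)\ge 0$ and $|\varphi(a)|\le \|a\|\,Tr_\omega(|D|^{-n})$, which is immediate; and your traciality is proved on the smooth subalgebra, so extend by this boundedness before applying uniqueness on the $C^*$-algebra. As for what each approach buys: the paper's invariance argument avoids all hard analysis and in fact only uses the (automatic) uniqueness of the $\alpha$-invariant state, whereas your argument leans on uniqueness of the tracial state, which is where the standing density hypothesis on $\Lambda_\varTheta+\mathbb{Z}^n$ enters; on the other hand, your commutator estimate is a reusable local fact (it is the standard mechanism behind Connes' trace-type theorems) and does not refer to the group action at all.
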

\begin{proof}
We have $\tau (a) = \tau (\alpha _{\textbf{g}} (a)), \, \, \forall \, \textbf{g} \in \mathbb{T}^n$ because $\tau$ is $G$-invariant on $\mathcal{A}_\varTheta$. The G.N.S $\, $ Hilbert space $L^2(\mathcal{A}_\varTheta,\tau)$ 
is identified with $l^2(\mathbb{Z}^n)$. For $\textbf{g} \in \mathbb{T}^n$, $\alpha_{\textbf{g}}(U_1^{k_1} \ldots U_n^{k_n}) = \textbf{g}^{\textbf{k}} U_1^{k_1} \ldots U_n^{k_n}$. 
Here $\textbf{g}=(g_1,\ldots,g_n) \in \mathbb{T}^n\, $; $\textbf{g}^{\textbf{k}} = g_1^{k_1} \ldots g_n^{k_n}$. Define,
\begin{eqnarray*}
U_\textbf{g} : \, L^2(\mathcal{A}_{\varTheta},\tau) \longrightarrow L^2(\mathcal{A}_{\varTheta},\tau)
\end{eqnarray*}
\begin{eqnarray*}
a \longmapsto \alpha_\textbf{g}(a)
\end{eqnarray*}
It is easy to check this map is isometry with dense range. Hence extends as unitary on $L^2(\mathcal{A}_\varTheta,\tau)$. For $e_\textbf{k} \in l^2(\mathbb{Z}^n)$, $U_\textbf{g}(e_\textbf{k}) 
= \textbf{g}^{\textbf{k}}e_{\textbf{k}}$. Since $D(e_{\textbf{k}} \otimes M) = \sum_{j=1}^n k_j e_{\textbf{k}} \otimes \gamma_j M$ for $M \in M_N(\mathbb{C})$, it follows that $D (U_{\textbf{g}}\otimes I) = (U_{\textbf{g}}\otimes I) D$ 
on $L^2(\mathcal{A}_\varTheta \, , \tau)\otimes \mathbb{C}^N$. But $(U_{\textbf{g}}\otimes I) D (U_{\textbf{g}}^*\otimes I) = D \Rightarrow (U_{\textbf{g}}\otimes I) |D| (U_{\textbf{g}}^*\otimes I) = |D|$ which further implies $(U_{\textbf{g}}\otimes I) |D|^{-n}(U_{\textbf{g}}^*\otimes I) = |D|^{-n}$. Hence, 
\begin{eqnarray*}
Tr_\omega ((U_{\textbf{g}}aU_{\textbf{g}}^*\otimes I) |D|^{-n}) = Tr_\omega ((U_{\textbf{g}}\otimes I)(a\otimes I)|D|^{-n}(U_{\textbf{g}}^*\otimes I)) = Tr_\omega ((a\otimes I)|D|^{-n})
\end{eqnarray*}
which shows that $1/Tr_\omega(|D|^{-n}) \int\, $ is also a $G$-invariant trace on $\mathcal{A}_\varTheta$. Now uniqueness of $G$-invariant trace on $\mathcal{A}_\varTheta$ gives 
$\tau(a) = Tr_\omega((a\otimes I)|D|^{-n})/Tr_\omega(|D|^{-n})$ where $Tr_\omega(|D|^{-n})$ is a positive constant.
\end{proof}
\begin{lemma}\label{spin}
If $\{\gamma_1,\ldots,\gamma_n\}$ are Clifford gamma matrices in $M_N(\mathbb{C})$ then they enjoys the property $Trace(\gamma_l\gamma_m) = 0$ for $l \neq m$.
\end{lemma}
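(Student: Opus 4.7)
The plan is to exploit the Clifford anticommutation relation $\gamma_l\gamma_m + \gamma_m\gamma_l = 2\delta_{lm}I$ together with the cyclicity of the matrix trace. The key observation is that for $l \neq m$ the relation forces $\gamma_m\gamma_l = -\gamma_l\gamma_m$, and cyclicity forces these two products to have the same trace, so their common trace must equal its own negative.

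More precisely, I would proceed as follows. First I would invoke cyclicity of the matrix trace to write
\begin{eqnarray*}
\mathrm{Trace}(\gamma_l\gamma_m) = \mathrm{Trace}(\gamma_m\gamma_l).
\end{eqnarray*}
Next, since $l \neq m$, the Clifford relation $\gamma_l\gamma_m + \gamma_m\gamma_l = 2\delta_{lm}I = 0$ gives $\gamma_m\gamma_l = -\gamma_l\gamma_m$, so substituting yields
\begin{eqnarray*}
\mathrm{Trace}(\gamma_l\gamma_m) = \mathrm{Trace}(-\gamma_l\gamma_m) = -\mathrm{Trace}(\gamma_l\gamma_m).
\end{eqnarray*}
Adding $\mathrm{Trace}(\gamma_l\gamma_m)$ to both sides and dividing by $2$ (valid since we are working over $\mathbb{C}$) gives $\mathrm{Trace}(\gamma_l\gamma_m) = 0$, as required.

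There is essentially no obstacle here: the proof is a one-line application of cyclicity plus the defining anticommutation relation, and no spin-representation theory or explicit form of the $\gamma_j$ is needed. The only mild subtlety worth flagging is that the argument relies on the base field having characteristic different from $2$, which is automatic in the setting $M_t(\mathbb{C})$ of the lemma. An alternative route, if one prefers a similarity-invariance phrasing, is to conjugate by $\gamma_l$ (noting $\gamma_l^{-1} = \gamma_l$ since $\gamma_l^2 = I$) to observe that $\gamma_l(\gamma_l\gamma_m)\gamma_l^{-1} = \gamma_m\gamma_l = -\gamma_l\gamma_m$, so $\gamma_l\gamma_m$ is similar to its negative and therefore has vanishing trace; this is logically equivalent but slightly longer.
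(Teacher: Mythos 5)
Your proof is correct and is essentially the same argument the paper has in mind: the paper simply asserts the result follows from the anticommutation relation, and you have spelled out the standard one-line verification via cyclicity of the trace (the conjugation variant you mention is an equivalent phrasing). No issues.
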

\begin{proof}
This follows immediately from the fact that Clifford gamma matrices satisfy the relation $\gamma_l\gamma_m + \gamma_m\gamma_l = 2\delta_{lm}$ for all $l,m$.
\end{proof}
\begin{lemma}\label{trace}
The positive linear functional $\int : T \longmapsto Tr_\omega (T|D|^{-n})/Tr_\omega (|D|^{-n})$, for $T \in M_N(\mathcal{A}_\varTheta)$, equals with $\tau \otimes Trace\, $, where $`Trace$' denotes the ordinary matrix trace (normalized) on $M_N(\mathbb{C})$.
\end{lemma}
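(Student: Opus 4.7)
The strategy is to exploit the tensor-product structure of $|D|^{-n}$ induced by the Clifford decomposition $D=\sum_j\delta_j\otimes\gamma_j$, and then invoke Lemma~\ref{D-trace} to pin down the overall scalar. The starting observation is that, since the derivations $\delta_j$ mutually commute while the Clifford matrices satisfy $\gamma_j\gamma_k+\gamma_k\gamma_j=2\delta_{jk}$, a direct expansion gives
\begin{eqnarray*}
D^{2}=\sum_{j,k}\delta_j\delta_k\otimes\gamma_j\gamma_k=\sum_j\delta_j^{2}\otimes I_N+\sum_{j<k}[\delta_j,\delta_k]\otimes\gamma_j\gamma_k=\Delta\otimes I_N,
\end{eqnarray*}
where $\Delta:=\sum_j\delta_j^{2}$ acts on $\widetilde{\mathcal{H}}=L^2(\mathcal{A}_\varTheta,\tau)$. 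Consequently $|D|^{-n}$ has the product form $\Lambda\otimes I_N$ (modulo finite rank) for a single operator $\Lambda$ on $\widetilde{\mathcal{H}}$.

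By $\mathbb{C}$-linearity it is enough to verify the identity on elementary tensors $T=a\otimes E$ with $a\in\mathcal{A}_\varTheta$ and $E\in M_N(\mathbb{C})$. For such $T$ one has $T|D|^{-n}=(a\Lambda)\otimes E$, and the heart of the proof is the following tensor-factorisation property of the Dixmier trace: for any $S\in\mathcal{L}^{(1,\infty)}(\widetilde{\mathcal{H}})$ and any $E\in M_N(\mathbb{C})$,
\begin{eqnarray*}
Tr_\omega(S\otimes E)=Tr_\omega(S)\cdot\mathrm{tr}_{\mathrm{un}}(E),
\end{eqnarray*}
where $\mathrm{tr}_{\mathrm{un}}$ is the \emph{unnormalised} matrix trace on $M_N(\mathbb{C})$. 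I would prove this by expanding $E=\sum_{j,k}E_{jk}e_{jk}$ in the standard matrix units. For $j\neq k$, cyclicity of $Tr_\omega$ gives
\begin{eqnarray*}
Tr_\omega(S\otimes e_{jk})=Tr_\omega\bigl((I\otimes e_{jj})(S\otimes e_{jk})\bigr)=Tr_\omega\bigl((S\otimes e_{jk})(I\otimes e_{jj})\bigr)=Tr_\omega(S\otimes e_{jk}e_{jj})=0,
\end{eqnarray*}
whereas for $j=k$ the operator $S\otimes e_{jj}$ shares the same nonzero singular values with $S$, so that $Tr_\omega(S\otimes e_{jj})=Tr_\omega(S)$.

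Taking $S=a\Lambda$ in the tensor factorisation yields $Tr_\omega((a\otimes E)|D|^{-n})=\mathrm{tr}_{\mathrm{un}}(E)\cdot Tr_\omega(a\Lambda)$. Specialising first to $E=I_N$ and invoking Lemma~\ref{D-trace} gives $Tr_\omega(a\Lambda)=\tau(a)\,Tr_\omega(|D|^{-n})/N$, whence
\begin{eqnarray*}
\frac{Tr_\omega\bigl((a\otimes E)|D|^{-n}\bigr)}{Tr_\omega(|D|^{-n})}=\tau(a)\cdot\frac{\mathrm{tr}_{\mathrm{un}}(E)}{N}=\tau(a)\cdot Trace(E)=(\tau\otimes Trace)(a\otimes E),
\end{eqnarray*}
which is the claim on elementary tensors, and hence by linearity on all of $M_N(\mathcal{A}_\varTheta)$. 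The main obstacle is the tensor-factorisation step above: it rests on cyclicity of the Dixmier trace together with its invariance under padding by zero singular values --- both standard facts but worth invoking with care, especially since $|D|^{-n}$ itself is only defined modulo a finite-rank projection onto $\ker D$.
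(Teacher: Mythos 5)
Your proof is correct, but it finishes the argument by a genuinely different route than the paper. Both begin from the same observation that $D^2=\sum_j\delta_j^2\otimes I_N$, so that $|D|^{-n}=\Lambda\otimes I_N$ splits across the Clifford factor. The paper then argues abstractly: since $|D|^{-n}$ commutes with $1\otimes M_N(\mathbb{C})$, the functional $\int$ is a trace on $M_N(\mathcal{A}_\varTheta)$, and $\tau\otimes Trace$ is the unique normalized such trace (ultimately because the density hypothesis on $\Lambda_\varTheta+\mathbb{Z}^n$ makes the tracial state on $\mathcal{A}_\varTheta$ unique). You instead compute: reduce to elementary tensors $a\otimes E$, establish the factorization $Tr_\omega(S\otimes E)=Tr_\omega(S)\cdot\mathrm{tr}_{\mathrm{un}}(E)$ by expanding in matrix units (off-diagonal units killed by cyclicity of $Tr_\omega$ against the bounded idempotent $I\otimes e_{jj}$, diagonal units handled by invariance of the Dixmier trace under padding with zero singular values), and then pin down the remaining scalar $Tr_\omega(a\Lambda)$ via Lemma~\ref{D-trace}. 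Your version is more elementary and arguably more careful: it never needs to verify that $\int$ is tracial on all of $M_N(\mathcal{A}_\varTheta)$ --- a point the paper passes over quickly, since commutation with $1\otimes M_N(\mathbb{C})$ alone does not yield the full trace property without also using Lemma~\ref{D-trace} on the $\mathcal{A}_\varTheta\otimes 1$ factor --- and it makes the normalization of $Trace$ explicit. What the paper's route buys is brevity and the conceptual point that $\tau\otimes Trace$ is forced by uniqueness alone. Your closing caveats (cyclicity of $Tr_\omega$ for a bounded operator against an $\mathcal{L}^{(1,\infty)}$ element, and $|D|^{-n}$ being defined only modulo the finite-rank kernel projection, which does not affect any Dixmier trace) are both legitimate and correctly resolved.
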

\begin{proof}
Since $D^2 = \sum \delta_j^2 \otimes I_N\, $, $|D|^{-n}$ commutes with $1 \otimes M_N({\mathbb C})$ it follows that $\int$ is a trace on $M_N(\mathcal{A}_\varTheta) \cong \mathcal{A}_\varTheta \otimes M_N(\mathbb{C})$.

Our requirement is now fulfilled because of the fact that $\tau \otimes Trace$ is the unique extention
(normalized) of $\tau$ on $M_N(\mathcal{A}_\varTheta)$.
\end{proof}
\begin{lemma}\label{ortho}
If $l \neq m$ then any $a \otimes \gamma_l\gamma_m$ lies in the range of $P$ where $P$ was the orthogonal projection onto the orthogonal complement of $\pi(dJ_o^{(1)}) \subseteq \Omega_D^2$.
\end{lemma}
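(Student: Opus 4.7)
The plan is to show directly that $a \otimes \gamma_l\gamma_m$ is orthogonal to every element of $\pi(dJ_0^{(1)})$ with respect to the inner product $\langle T_1,T_2\rangle = Tr_\omega(T_1^*T_2|D|^{-n})$, which is exactly the condition to lie in the range of the projection $P$.

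First I would invoke the computation from the proof of the previous proposition. There it was shown that for $\omega = \sum_s a_s db_s \in J_0^{(1)}$, the coefficients of all the products $\gamma_p\gamma_q$ with $p < q$ in $\pi(d\omega)$ cancel (this uses the commutation $\delta_p\delta_q = \delta_q\delta_p$ together with the constraint $\sum_s a_s\delta_j(b_s) = 0$ coming from $\pi(\omega) = 0$). Consequently every element of $\pi(dJ_0^{(1)})$ has the form $b \otimes I$ for some $b \in \mathcal{A}_\varTheta$.

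Next I would compute the inner product of such an element against $a \otimes \gamma_l\gamma_m$:
\begin{eqnarray*}
\langle b \otimes I, a \otimes \gamma_l\gamma_m\rangle
  = Tr_\omega\bigl((b^*a \otimes \gamma_l\gamma_m)|D|^{-n}\bigr).
\end{eqnarray*}
By Lemma \ref{trace}, this equals $Tr_\omega(|D|^{-n}) \cdot (\tau \otimes Trace)(b^*a \otimes \gamma_l\gamma_m) = Tr_\omega(|D|^{-n}) \cdot \tau(b^*a) \cdot Trace(\gamma_l\gamma_m)$. By Lemma \ref{spin}, the last factor vanishes since $l \neq m$. Hence $a \otimes \gamma_l\gamma_m$ is orthogonal to every $b \otimes I$, and therefore lies in the range of $P$.

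The only non-routine step is the initial identification of $\pi(dJ_0^{(1)})$ with the ``scalar'' summand $\mathcal{A}_\varTheta \otimes I$, but this has already been done in the proof of the proposition on $\Omega_D^2$, so the argument reduces to the clean trace computation above. There is no real obstacle here, just a direct application of Lemmas \ref{trace} and \ref{spin}.
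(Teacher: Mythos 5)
Your proof is correct and follows the same route as the paper: identify every element of $\pi(dJ_0^{(1)})$ as $b\otimes I$ (using the computation already done for the proposition on $\Omega_D^2$), then reduce the inner product to $Tr_\omega(|D|^{-n})\,\tau(b^*a)\,Trace(\gamma_l\gamma_m)$, which vanishes by the Clifford relation. No gaps.
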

\begin{proof}
Recall that any element of $\pi(dJ_0^{(1)})$ looks like $x \otimes I$. Now $\langle\, a \otimes \gamma_l\gamma_m\, ,\, x\otimes I\, \rangle_{\pi(\Omega^2)} = Tr_\omega ((a^*x \otimes \gamma_l\gamma_m)|D|^{-n}) 
= Tr_\omega(|D|^{-n})\tau(a^*x) Trace\, (\gamma_l\gamma_m) = 0$, since $Trace\, (\gamma_l\gamma_m) = 0$ by Lemma~(\,\ref{spin}\,).
\end{proof}

Now we are ready to calculate the Yang-Mills for $\mathcal{A}_\varTheta$. Since $\Omega_D^1 \cong \mathcal{A}_{\varTheta}^n$, any compatible connection $\nabla : \mathcal{E} \longrightarrow \mathcal{E}\otimes \Omega_D^1\, $ 
is given by $n$-tuple of maps $(\nabla_1,\dots,\nabla_n)$, where $\nabla_j : \mathcal{E} \longrightarrow \mathcal{E}\, $ such that,
\begin{eqnarray}\label{connection-2}
\nabla(\xi) = \displaystyle\sum_{j=1}^n \nabla_j(\xi)\otimes \sigma_j 
\end{eqnarray}
\begin{eqnarray}
\langle \xi,\nabla_j(\eta) \rangle - \langle \nabla_j(\xi),\eta \rangle & = & \delta_j(\langle \xi,\eta \rangle_{\mathcal{A}_\varTheta}).
\end{eqnarray}
Here $\{\sigma_1,\ldots,\sigma_n\}$ is the standard basis of $\mathcal{A}_\varTheta^n$ as free $\mathcal{A}_{\varTheta}$-bimodule. Then $\widetilde\nabla : \mathcal{E}\otimes \Omega_D^1 \longrightarrow \mathcal{E}\otimes\Omega_D^2\,$ 
is given by, $\, \widetilde\nabla(\xi\otimes \sigma_m) = \left(\displaystyle\sum_{j=1}^n \nabla_j(\xi)\otimes \sigma_j\right) \sigma_m + \xi\otimes\tilde d(\sigma_m)\,$ for each $m=1,\ldots,n$.
\begin{proposition}\label{curvature}
The {\it curvature} $\, \varTheta = \widetilde\nabla \circ \nabla\, $ is given by $\displaystyle\sum_{m< j} [\nabla_m,\nabla_j](.)\otimes \sigma_m\sigma_j$ where $\sigma_m,\sigma_j \in \mathcal{A}_{\varTheta}^n\,$ and  
$\sigma_m\sigma_j$ is the element in $\mathcal{A}_{\varTheta}^{n(n-1)/2}$ produced by the product map $\widetilde{\prod}$ of $\, Lemma~(\,\ref{product map})$.
\end{proposition}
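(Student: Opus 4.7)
The plan is to simply unravel the definitions: substitute the expression (\ref{connection-2}) for $\nabla$ into $\widetilde{\nabla} \circ \nabla$, use the Leibniz-type formula defining $\widetilde{\nabla}$, reduce the resulting double sum using the explicit form of the product map on $\Omega_D^1 \times \Omega_D^1$, and rearrange into commutators.

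First I would compute, using (\ref{connection-2}) and the defining formula for $\widetilde{\nabla}$,
\begin{eqnarray*}
\widetilde{\nabla}(\nabla(\xi)) \; = \; \sum_{j=1}^n \widetilde{\nabla}\bigl(\nabla_j(\xi) \otimes \sigma_j\bigr) \; = \; \sum_{j,k=1}^n \nabla_k(\nabla_j(\xi)) \otimes \sigma_k \sigma_j \; + \; \sum_{j=1}^n \nabla_j(\xi) \otimes \tilde{d}(\sigma_j).
\end{eqnarray*}
The next step is to verify that $\tilde{d}(\sigma_j) = 0$ for each $j$. Applying the formula from the lemma describing $\tilde{d} : \Omega_D^1 \to \Omega_D^2$ with $a = 1$ in the $j$-th slot, each entry of $\tilde{d}(\sigma_j)$ involves only terms of the form $\delta_p(U_j^*)\delta_q(U_j)$ with $p \neq q$. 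Since $\widetilde{\delta_l}(U_k) = i\delta_{lk} U_k$ gives $\delta_p(U_j) = \delta_{pj}U_j$ and $\delta_p(U_j^*) = -\delta_{pj}U_j^*$, every such product vanishes whenever $p \neq q$. Hence the second sum disappears.

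Next I would apply Lemma~\ref{product map} to evaluate $\sigma_k \sigma_j$. Since $\sigma_k$ has $1$ in position $k$ and $\sigma_j$ has $1$ in position $j$, the formula $((a_p b_q - a_q b_p))_{p<q}$ gives that $\sigma_k \sigma_j$ is the basis vector at the $(k,j)$-entry when $k < j$, minus that basis vector at $(j,k)$ when $k > j$, and zero when $k = j$; in particular $\sigma_k \sigma_j = -\sigma_j \sigma_k$ and $\sigma_j\sigma_j = 0$.

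Finally, splitting the remaining double sum according to $k < j$, $k > j$, $k = j$ and relabelling indices in the $k > j$ portion yields
\begin{eqnarray*}
\sum_{j,k=1}^n \nabla_k(\nabla_j(\xi)) \otimes \sigma_k \sigma_j \; = \; \sum_{m<j} \bigl(\nabla_m \nabla_j(\xi) - \nabla_j \nabla_m(\xi)\bigr) \otimes \sigma_m \sigma_j \; = \; \sum_{m<j} [\nabla_m,\nabla_j](\xi) \otimes \sigma_m \sigma_j,
\end{eqnarray*}
which is the claimed formula. Nothing here is deep; the only step requiring a small argument is the vanishing $\tilde{d}(\sigma_j)=0$, and the only place one must be careful is the sign bookkeeping when collapsing the double sum into commutators via the antisymmetry $\sigma_k \sigma_j = -\sigma_j \sigma_k$.
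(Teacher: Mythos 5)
Your proposal is correct and follows essentially the same route as the paper: expand $\widetilde{\nabla}\circ\nabla$ via the Leibniz formula, kill the $\tilde d(\sigma_j)$ terms (the paper does this via $\delta_j(U_m^*)=-U_m^*\delta_j(U_m)U_m^*$, you via $\delta_p(U_j)=\delta_{pj}U_j$ -- both amount to the same vanishing), and collapse the double sum into commutators using the antisymmetry of the product map. The sign bookkeeping in your final step checks out.
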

\begin{proof}
Through direct computation we get,
\begin{eqnarray*}
\varTheta(\xi) & = & \widetilde\nabla \circ \nabla(\xi)\\
               & = & \displaystyle\sum_{m=1}^n \widetilde\nabla(\nabla_m(\xi)\otimes \sigma_m)\\
               & = & \displaystyle\sum_m \left((\displaystyle\sum_j \nabla_j(\nabla_m(\xi))\otimes \sigma_j)\sigma_m + \nabla_m(\xi)\otimes \tilde d(\sigma_m)\right)\\
               & = & \displaystyle\sum_{m,j} \nabla_j(\nabla_m(\xi))\otimes \sigma_j\sigma_m + \nabla_m(\xi)\otimes \tilde d(\sigma_m)\\
               & = & \displaystyle\sum_{m< j} [\nabla_m,\nabla_j](\xi)\otimes \sigma_m\sigma_j + \displaystyle\sum_m \nabla_m(\xi)\otimes \tilde d(\sigma_m).
\end{eqnarray*}
But, $\displaystyle\sum_m \nabla_m(\xi)\otimes \tilde d(\sigma_m) = \displaystyle\sum_m \nabla_m(\xi)\otimes ((\delta_p(U_m^*)\delta_q(U_m) - \delta_q(U_m^*)\delta_p(U_m))_{1\leq p<q\leq n} = 0$ because 
$\delta_j(U_m^*) = -U_m^*\delta_j(U_m)U_m^*$. Hence $\,\varTheta = \displaystyle\sum_{m<j}[\nabla_m,\nabla_j]\,\otimes \,\sigma_m\sigma_j\,$.
\end{proof}
\begin{proposition}\label{main}
$\textit{YM}(\nabla) = \displaystyle\sum_{m<j} \tau_q([\nabla_m,\nabla_j]^*[\nabla_m,\nabla_j])$ upto a positive factor where $\tau_q$ denotes the extended trace $\tau \otimes Trace$ on $M_q(\mathcal{A}_\varTheta)$.
\end{proposition}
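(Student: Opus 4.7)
The plan is to start from the closed form for the curvature provided by Proposition \ref{curvature}, unfold the inner product $\langle\langle\varTheta,\varTheta\rangle\rangle$ on $Hom_{\mathcal{A}_\varTheta}(\mathcal{E},\mathcal{E}\otimes_{\mathcal{A}_\varTheta}\Omega_D^2)$, and reduce it via the three auxiliary lemmas (Lemma \ref{D-trace}, Lemma \ref{spin}, Lemma \ref{trace}, Lemma \ref{ortho}) to an expression in the matrix trace $\tau_q$. The first step is to pin down the representative in $\pi(\Omega^2)$ of the basis element $\sigma_m\sigma_j\in\Omega_D^2$. Because $\delta_m(U_m)=U_m$, one has $U_m^*\delta_m(U_m)=1$, so under the isomorphism $\Omega_D^1\cong \mathcal{A}_\varTheta^n$ we have $\sigma_m\longleftrightarrow 1\otimes\gamma_m$; combining this with Lemma \ref{product map} identifies $\sigma_m\sigma_j$ with the class of $1\otimes\gamma_m\gamma_j$ in $\pi(\Omega^2)/\pi(dJ_0^{(1)})$ for $m<j$.

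Next I would evaluate $\varTheta(p\widetilde{e}_k)$ explicitly. Writing $[\nabla_m,\nabla_j](p\widetilde{e}_k)=(\xi^{(mj)}_{lk})_{l=1}^q\in p\mathcal{A}_\varTheta^q$ and using the identification $\mathcal{E}\otimes\Omega_D^2\hookrightarrow(\Omega_D^2)^q$, the $l$-th coordinate of $\varTheta(p\widetilde{e}_k)$ becomes the class of $\sum_{m<j}\xi^{(mj)}_{lk}\otimes\gamma_m\gamma_j$ in $\Omega_D^2$. By Lemma \ref{ortho} each summand already lies in the range of $P$, so the inner product on $\Omega_D^2$ coincides with the ambient inner product on $\pi(\Omega^2)$ without any projection. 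Therefore
\begin{eqnarray*}
\langle\langle\varTheta,\varTheta\rangle\rangle
&=&\sum_{k,l}\sum_{m<j}\sum_{m'<j'}Tr_\omega\!\left((\xi^{(mj)}_{lk})^{*}\xi^{(m'j')}_{lk}\otimes\gamma_j\gamma_m\gamma_{m'}\gamma_{j'}\,|D|^{-n}\right).
\end{eqnarray*}

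The next ingredient is Lemma \ref{trace}, which rewrites the Dixmier trace above as $Tr_\omega(|D|^{-n})\cdot(\tau\otimes Trace)$. One is then reduced to computing $Trace(\gamma_j\gamma_m\gamma_{m'}\gamma_{j'})$ for $m<j$ and $m'<j'$. Using $\gamma_r^2=I$, anticommutation, and Lemma \ref{spin}, this trace vanishes unless $(m,j)=(m',j')$, in which case it equals $1$; the three cases one must verify are: four distinct indices (conjugate by any one $\gamma_k$ among the four to get $X\mapsto -X$), exactly one index in common (the expression collapses to $\pm\gamma_a\gamma_b$ with $a\neq b$, killed by Lemma \ref{spin}), and equal pairs. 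Substituting this Kronecker delta eliminates the cross terms and leaves
\begin{eqnarray*}
\langle\langle\varTheta,\varTheta\rangle\rangle
&=&Tr_\omega(|D|^{-n})\sum_{m<j}\sum_{k,l}\tau\!\left((\xi^{(mj)}_{lk})^{*}\xi^{(mj)}_{lk}\right).
\end{eqnarray*}

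To finish, I would pick a lift $B_{mj}\in M_q(\mathcal{A}_\varTheta)$ of $[\nabla_m,\nabla_j]\in End(\mathcal{E})\cong pM_q(\mathcal{A}_\varTheta)p$ with $B_{mj}=pB_{mj}p$, so that $\xi^{(mj)}_{lk}=(B_{mj})_{lk}$. The inner double sum then becomes $\sum_k\tau\bigl((B_{mj}^{*}B_{mj})_{kk}\bigr)$, which equals $(\tau\otimes Tr)(B_{mj}^{*}B_{mj})$ for the unnormalized matrix trace; since $\tau_q=\tau\otimes Trace$ is the normalized version, these agree up to the constant factor $q$, and the total positive constant in front is $q\,Tr_\omega(|D|^{-n})$. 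The main obstacle I anticipate is the bookkeeping in steps two and three — correctly tracking the isomorphism $\mathcal{E}\otimes\Omega_D^2\subseteq(\Omega_D^2)^q$ and carrying out the Clifford trace vanishing calculation cleanly — while the rest is formal once those identifications are made.
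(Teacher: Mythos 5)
Your proposal is correct and follows essentially the same route as the paper: expand $\varTheta(p\widetilde e_k)$ via Proposition \ref{curvature}, identify $\sigma_m\sigma_j$ with the class of $1\otimes\gamma_m\gamma_j$, use Lemma \ref{ortho} to drop the projection $P$, Lemmas \ref{spin} and \ref{trace} to kill the cross terms in the Clifford factor, and then assemble the surviving diagonal terms into $\tau_q([\nabla_m,\nabla_j]^*[\nabla_m,\nabla_j])$. Your explicit case analysis for the vanishing of $Trace(\gamma_j\gamma_m\gamma_{m'}\gamma_{j'})$ and your remark about the normalization constant $q$ are slightly more detailed than the paper's treatment, but harmless since the statement is only claimed up to a positive factor.
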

\begin{proof}
Recall that $\langle\langle \varTheta,\varTheta\rangle\rangle = \displaystyle\sum_{k=1}^q\, \langle\, \varTheta(p\widetilde e_k),\varTheta(p\widetilde e_k)\, \rangle_{\Omega_D^2}$ where $\{\widetilde e_1,\ldots,\widetilde e_q\}$ 
denotes standard basis of $\mathcal{A}_{\varTheta}^q$ and $\mathcal{E} = p\mathcal{A}_{\varTheta}^q$. Let $[\nabla_m,\nabla_j](p\widetilde e_k) = \eta^{(mjk)} = p\eta^{(mjk)} = (\eta_1^{(mjk)},\ldots,\eta_q^{(mjk)}) 
\in \mathcal{A}_\varTheta^q$. Then from proposition (\,\ref{curvature}\,) we get 
\[\varTheta(p\widetilde e_k) = \sum_{m<j}(\eta_1^{(mjk)}\sigma_m\sigma_j,\ldots,\eta_q^{(mjk)}\sigma_m\sigma_j),\] an element of 
$(\Omega_D^2)^q$. It is easily seen that as $\mathbb{C}$ vector spaces $Hom(\mathcal{E},\mathcal{E} \otimes \Omega_D^2) \cong \bigoplus Hom(\mathcal{E},\mathcal{E})$. We can view $End(\mathcal{E})$ as 
$pM_q(\mathcal{A}_\varTheta)p \subseteq M_q(\mathcal{A}_\varTheta)$. We have an inner-product on $\bigoplus M_q(\mathcal{A}_\varTheta)$ given by $\langle (A_1,\ldots,A_t),(B_1,\ldots,B_t)\rangle 
= \displaystyle\sum_{j=1}^t \tau_q(A_j^*B_j)$. Following calculation shows this inner-product becomes same with the one on $Hom(\mathcal{E},\mathcal{E} \otimes \Omega_D^2)$.
\begin{eqnarray*}
\langle \varTheta(p\widetilde e_k),\varTheta(p\widetilde e_k)\rangle
                                                  & = & \displaystyle\sum_{m<j,l<r} \, \langle\, (\eta_1^{(mjk)} \sigma_m\sigma_j,\ldots,\eta_q^{(mjk)} \sigma_m\sigma_j)\, ,\, 
(\eta_1^{(lrk)} \sigma_l\sigma_r,\ldots,\eta_q^{(lrk)} \sigma_l\sigma_r)\, \rangle\\
                                                  & = & \displaystyle\sum_{m<j,l<r} \displaystyle\sum_{s=1}^q \, \langle\, \eta_s^{(mjk)} \sigma_m\sigma_j\, ,\, \eta_s^{(lrk)} \sigma_l\sigma_r\, 
\rangle_{\Omega_D^2}\\
                                                  & = & \displaystyle\sum_{m<j,l<r} \displaystyle\sum_{s=1}^q \, \langle\, [\eta_s^{(mjk)}\otimes \gamma_m\gamma_j]\, ,\, [\eta_s^{(lrk)}\otimes 
\gamma_l\gamma_r]\, \rangle_{\Omega_D^2}\\
                                                  & = & \displaystyle\sum_{m<j,l<r} \displaystyle\sum_{s=1}^q \, \langle \, P(\eta_s^{(mjk)}\otimes \gamma_m\gamma_j)\, ,\, P(\eta_s^{(lrk)}\otimes 
\gamma_l\gamma_r)\, \rangle_{\pi(\Omega^2)}\\
                                                  & = & \displaystyle\sum_{m<j,l<r} \displaystyle\sum_{s=1}^q Tr_\omega \left(({\eta_s^{(mjk)}}^* \eta_s^{(lrk)}\otimes \gamma_j\gamma_m
\gamma_l\gamma_r)|D|^{-n}\right).
\end{eqnarray*}
Last equality follows from Lemma (\ref{ortho}). Now use of Lemma (\ref{spin}) and (\ref{trace}) shows the following,
\begin{eqnarray*}
\langle \langle \varTheta,\varTheta \rangle\rangle   & = & Tr_\omega(|D|^{-n}) \displaystyle\sum_{k=1}^q \displaystyle\sum_{m<j} \displaystyle\sum_{s=1}^q\, \tau\left({\eta_s^{(mjk)}}^*\eta_s^{(mjk)}\right)\\
                                                     & = & Tr_\omega(|D|^{-n})\displaystyle\sum_{k=1}^q \displaystyle\sum_{m<j}\, \tau\left(\, \langle\, [\nabla_m,\nabla_j](p\widetilde e_k)\, ,\, [\nabla_m,\nabla_j]
(p\widetilde e_k)\, \rangle_{\mathcal{A}_\varTheta}\, \right)\\
                                                     & = & Tr_\omega(|D|^{-n})\displaystyle\sum_{k=1}^q \displaystyle\sum_{m<j}\, \tau\left(\, \langle\, p\widetilde e_k\, ,\, [\nabla_m,\nabla_j]^*[\nabla_m,\nabla_j]
(p\widetilde e_k)\, \rangle_{\mathcal{A}_\varTheta}\, \right)\\
                                                     & = & Tr_\omega(|D|^{-n})\displaystyle\sum_{m<j}\, \tau_q\, \left(\, [\nabla_m,\nabla_j]^*[\nabla_m,\nabla_j]\, \right).
\end{eqnarray*}
The last equality follows from the fact that for any $T=((t_{rs})) \in pM_q(\mathcal{A}_\varTheta)p\, $ where $\,p \in M_q(\mathcal{A}_\varTheta)$ is a projection, 
$\displaystyle\sum_{k=1}^q \, \langle \widetilde e_k,T\widetilde e_k\rangle_{\mathcal{A}_\varTheta} = \displaystyle\sum_{k=1}^q \, \langle\, p\widetilde e_k,Tp\widetilde e_k\, \rangle_{\mathcal{A}_\varTheta} 
= \displaystyle\sum_{r=1}^q t_{rr}\,$. Hence follows the proposition.
\end{proof}
Recall that $\{e_1,\ldots,e_n \}$ denotes the standard basis choosen for $\mathbb{R}^n$ and $\{\sigma_1,\ldots,\sigma_n\}$ is the standard basis of $\Omega_D^1$. We have an one to one correspondence between these sets, 
both being finite sets of same cardinality. The following theorem points out the main result. 
\begin{theorem}
Let $C(\mathcal{E})$ and $\widetilde{C}(\mathcal{E})$ denote the affine space of compatible connections for the first and second approaches respectively. Then both these are in one to one correspondence through
an affine isomorphism and the value of Yang-Mills functional on corresponding elements of these spaces are same upto a positive scalar factor. That is to say the following diagram commutes.
\begin{center}
\begin{tikzpicture}[node distance=3cm,auto]
\node (D){};
\node (A)[node distance=1.5cm,left of=D]{$C(\mathcal{E})$};
\node (B)[node distance=1.5cm,right of=D]{$\widetilde{C}(\mathcal{E})$};
\node (C)[node distance=2cm,below of=D]{$\mathbb{R}_+ \cup \{0\}$};
\draw[->](A) to node{{$\Phi$}}(B);
\draw[->](B)to node{{\tiny $\textit{YM}$}}(C);
\draw[->](A)to node[swap]{{\tiny $\textit{cYM}$}}(C);
\end{tikzpicture} 
\end{center}
where $c=2N\pi^{n/2}/(n(2\pi)^n\Gamma(n/2)).$
\end{theorem}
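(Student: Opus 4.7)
The plan is to exhibit an explicit affine bijection $\Phi : C(\mathcal{E}) \to \widetilde{C}(\mathcal{E})$ and then invoke Proposition~\ref{main} to compare the two Yang-Mills functionals. Writing $\nabla \in C(\mathcal{E})$ as the $n$-tuple $(\nabla_{\widetilde{\delta}_1},\ldots,\nabla_{\widetilde{\delta}_n})$ via (\ref{connection-1}) and $\nabla' \in \widetilde C(\mathcal{E})$ as $(\nabla'_1,\ldots,\nabla'_n)$ via (\ref{connection-2}), the natural candidate is
\[
\Phi(\nabla_{\widetilde{\delta}_1},\ldots,\nabla_{\widetilde{\delta}_n}) \;=\; (-i\,\nabla_{\widetilde{\delta}_1},\ldots,-i\,\nabla_{\widetilde{\delta}_n}).
\]
This choice is dictated by the relation $\delta_j = -i\,\widetilde{\delta}_j$: under this scaling, the Leibniz rule $\nabla_{\widetilde{\delta}_j}(\xi a) = \nabla_{\widetilde{\delta}_j}(\xi)a + \xi\,\widetilde{\delta}_j(a)$ of the first formulation transforms into $\nabla'_j(\xi a) = \nabla'_j(\xi)a + \xi\,\delta_j(a)$, which is exactly the component form of the Leibniz rule for a connection $\mathcal{E} \to \mathcal{E}\otimes\Omega_D^1 \cong \mathcal{E}\otimes \mathcal{A}_\varTheta^n$.

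I would then verify that $\Phi$ preserves compatibility with the Hermitian structure. Substituting $\nabla'_j = -i\,\nabla_{\widetilde{\delta}_j}$ and using sesquilinearity of $\langle\cdot,\cdot\rangle_{\mathcal{A}_\varTheta}$ (so that a scalar $-i$ in the left slot becomes $+i$), one obtains
\begin{eqnarray*}
\langle\xi,\nabla'_j\eta\rangle - \langle\nabla'_j\xi,\eta\rangle &=& -i\langle\xi,\nabla_{\widetilde{\delta}_j}\eta\rangle - i\langle\nabla_{\widetilde{\delta}_j}\xi,\eta\rangle \\
&=& -i\,\widetilde{\delta}_j(\langle\xi,\eta\rangle) \;=\; \delta_j(\langle\xi,\eta\rangle),
\end{eqnarray*}
which is the compatibility condition of Section~3. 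Since $\Phi$ is componentwise scaling by $-i$, it is a bijection with inverse given by scaling by $+i$, and both $C(\mathcal{E})$ and $\widetilde C(\mathcal{E})$ are affine over the same vector space $Hom_{\mathcal{A}_\varTheta}(\mathcal{E},\mathcal{E})^n$, so $\Phi$ is an affine isomorphism.

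For the equality of Yang-Mills functionals, Proposition~\ref{main} gives
\[
\textit{YM}(\Phi(\nabla)) \;=\; c\displaystyle\sum_{m<j}\tau_q\bigl([\nabla'_m,\nabla'_j]^*[\nabla'_m,\nabla'_j]\bigr).
\]
The elementary identity $[-iA,-iB] = -[A,B]$ yields $[\nabla'_m,\nabla'_j]^*[\nabla'_m,\nabla'_j] = [\nabla_{\widetilde{\delta}_m},\nabla_{\widetilde{\delta}_j}]^*[\nabla_{\widetilde{\delta}_m},\nabla_{\widetilde{\delta}_j}]$. Moreover, under the identification $End(\mathcal{E}) \cong pM_q(\mathcal{A}_\varTheta)p$, the canonical trace $\widetilde{\tau}$ of Section~2 agrees on rank-one operators with the restriction of $\tau_q$: indeed $\widetilde{\tau}(\langle\xi,\eta\rangle_\mathcal{E}) = \tau(\sum_i \eta_i^*\xi_i) = \tau_q(\xi\eta^*)$. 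Assembling everything gives $\textit{YM}(\Phi(\nabla)) = c\cdot \textit{YM}(\nabla)$ with $c = Tr_\omega(|D|^{-n}) = 2N\pi^{n/2}/(n(2\pi)^n\Gamma(n/2))$.

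The main obstacle is not computational but organizational: one must simultaneously track the normalization $\delta_j \leftrightarrow -i\,\widetilde{\delta}_j$, the conjugation introduced by sesquilinearity of the Hermitian pairing, and the matching of $\widetilde{\tau}$ with $\tau_q$ under $End(\mathcal{E}) \cong pM_q(\mathcal{A}_\varTheta)p$. Once these three bookkeeping points are aligned, the heavy analytic lifting has already been done in Propositions~\ref{curvature} and~\ref{main}, and the theorem follows by direct substitution.
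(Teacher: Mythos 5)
Your proposal is correct and follows essentially the same route as the paper: the map $\Phi$ given by componentwise multiplication by $-i$ (dictated by $\delta_j=-i\widetilde{\delta}_j$), the compatibility check using sesquilinearity and $\sigma_j^*=\sigma_j$, the inverse given by multiplication by $i$, and the comparison of functionals via Proposition~\ref{main}, the cancellation $[-iA,-iB]^*[-iA,-iB]=[A,B]^*[A,B]$, and the identification of $\widetilde{\tau}$ with $\tau_q$ on $pM_q(\mathcal{A}_\varTheta)p$, yielding the factor $c=Tr_\omega(|D|^{-n})$. The only minor imprecision is describing both affine spaces as modelled on all of $Hom_{\mathcal{A}_\varTheta}(\mathcal{E},\mathcal{E})^n$ (the differences are constrained to skew-adjoint, respectively self-adjoint, tuples, which $\Phi$ interchanges), but this does not affect the argument.
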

\begin{proof}
Recall from equation (\ref{connection-1}) for any $\nabla \in C(\mathcal{E})$, $\nabla(\xi) = \sum_{j=1}^n \nabla_j(\xi)\otimes e_j\,$ where $\nabla_j : \mathcal{E} \longrightarrow \mathcal{E}$. We define $\Phi(\nabla) = \widetilde{\nabla}\,$ where,
\begin{eqnarray*}
\widetilde{\nabla}(\xi) = \displaystyle\sum_{j=1}^n (-i)\nabla_j(\xi)\otimes \sigma_j
\end{eqnarray*}
It is easy to see that $\widetilde{\nabla}$ defines a connection. Given compatibility of $\nabla\,$, we have to check whether $\widetilde{\nabla}$ is compatible with respect to the {\it Hermitian} structure. 
This follows from a direct calculation.
\begin{eqnarray*}
\langle\, \xi,\widetilde{\nabla}(\eta)\, \rangle - \langle\, \widetilde{\nabla}(\xi),\eta\, \rangle & = & \displaystyle\sum_{j=1}^n\, (\langle\, \xi,(-i)\nabla_j(\eta)\otimes \sigma_j\, \rangle - \langle\, 
(-i)\nabla_j(\xi)\otimes \sigma_j,\eta\, \rangle\, )\\
                                       & = & \displaystyle\sum_{j=1}^n\, (\, \langle\, \xi,\nabla_j(\eta)\, \rangle_{\mathcal{A}_\varTheta}(-i)\sigma_j - i\sigma_j^*\langle\, \nabla_j(\xi),\eta\, 
\rangle_{\mathcal{A}_\varTheta})\\
                                       & = & \displaystyle\sum_{j=1}^n\, (\, \langle\, \xi,\nabla_j(\eta)\, \rangle_{\mathcal{A}_\varTheta}(-i)\sigma_j - i\sigma_j\langle\, \nabla_j(\xi),\eta\, 
\rangle_{\mathcal{A}_\varTheta})\\
                                       & = & (-i)(\langle\, \xi,\nabla_1(\eta)\rangle_{\mathcal{A}_\varTheta} + \langle\nabla_1(\xi),\eta\, \rangle_{\mathcal{A}_\varTheta},\ldots,\\
                                       &   &  \langle\, \xi,\nabla_n(\eta)\rangle_{\mathcal{A}_\varTheta} + \langle\nabla_n(\xi),\eta\, \rangle_{\mathcal{A}_\varTheta})\\
                                       & = & (-i)(\widetilde{\delta_1}(\, \langle\, \xi,\eta\, \rangle_{\mathcal{A}_\varTheta}),\ldots,\widetilde{\delta_n}(\, \langle\, \xi,\eta\, \rangle_{\mathcal{A}_\varTheta}))\\
                                       & = & (\delta_1(\, \langle\, \xi,\eta\, \rangle_{\mathcal{A}_\varTheta}),\ldots,\delta_n(\, \langle\, \xi,\eta\, \rangle_{\mathcal{A}_\varTheta}))\\
                                       & = & \tilde d\,(\, \langle\, \xi,\eta\, \rangle_{\mathcal{A}_\varTheta}).
\end{eqnarray*}
which shows compatibility of $\widetilde{\nabla}$ with respect to the {\it Hermitian} structure and hence $\widetilde{\nabla}$ belongs to $\widetilde C(\mathcal{E})$. Conversely, for given 
$\widetilde{\nabla} \in \widetilde{C}(\mathcal{E})$ recall from equation (\ref{connection-2}) that $\widetilde{\nabla}(\xi) = \displaystyle\sum_{j=1}^n \widetilde{\nabla}_j(\xi)\otimes \sigma_j\,$ where 
$\widetilde{\nabla}_j : \mathcal{E} \longrightarrow \mathcal{E}$. We define $\Phi^{-1}(\widetilde{\nabla}) = \nabla$ where,
\begin{eqnarray*}
\nabla(\xi) = \displaystyle\sum_{j=1}^n i\widetilde{\nabla}_j(\xi)\otimes e_j
\end{eqnarray*}
An absolutely similar computation shows the compatibility of $\nabla$. So elements of $C(\mathcal{E})$ and $\widetilde{C}(\mathcal{E})$ are in one-one correspondence. Recall from equation (\,\ref{YM}\,), 
for finitely generated projective $\mathcal{A}_\varTheta$-module $\mathcal{E}=p\mathcal{A}_{\varTheta}^q\,$ we obtained for $\nabla \in C(\mathcal{E})$,
\begin{eqnarray*}
\textit{YM}(\nabla) = \displaystyle\sum_{j<k}\, \widetilde{\tau}([\nabla_j,\nabla_k]^*[\nabla_j,\nabla_k])
\end{eqnarray*}
where $\widetilde{\tau}$ was the trace on $End(\mathcal{E})$. For $\Phi(\nabla)=\widetilde{\nabla}$ we obtain from Proposition (\,\ref{main}\,)\, , 
\begin{eqnarray*}
\textit{YM}(\widetilde{\nabla}) = Tr_\omega(|D|^{-n})\displaystyle\sum_{j<k}\, \tau_q([\nabla_j,\nabla_k]^*[\nabla_j,\nabla_k])
\end{eqnarray*}
where $\tau_q$ was the extended trace of $\tau$ on $M_q(\mathcal{A}_{\varTheta})$. Identifying $End(\mathcal{E})$ with $pM_q(\mathcal{A}_{\varTheta})p \subseteq M_q(\mathcal{A}_{\varTheta})$ we see that both 
$\widetilde{\tau}$ and $\tau_q$ are equal with $\tau \otimes Trace$. Hence follows the equality of Yang-Mills for both the approaches except for the positive 
scalar factor $Tr_\omega(|D|^{-n}) = 2N\pi^{n/2}/(n(2\pi)^n\Gamma(n/2))$.
\end{proof}
\bigskip


\begin{thebibliography}{999}
\bibitem[Co1]{1} A. Connes, {\it $C^*$-alg\`{e}bres et g\'{e}om\'{e}trie differentielle}, C.R. Acad. Sc. Paris Ser. \textbf{A}-\textbf{B} 290 (1980), no. 13, A599$-$A604.
\bibitem[Co2]{2} A. Connes, {\it Noncommutative Geometry}, Academic Press, 1994.
\bibitem[CoR]{3} A. Connes, M.A. Rieffel, {\it Yang-Mills for non-commutative two-tori}, Contemp. Math. 62 (1987) 237-266.
\bibitem[GVF]{5} J. Gracia-Bond\'{\i}a, J. V\'{a}rilly, H. Figueroa, {\it Elements of Noncommutative Geometry}, Birkh\"{a}user, 2000.
\bibitem[KS]{6} Konechny, Anatoly; Schwarz, Albert, {\it Supersymmetry algebra and BPS states of super Yang-Mills theories on noncommutative tori.} Phys. Lett. B 453 (1999), no. 1-2, 2329. 
\bibitem[Rie1]{8} M. Rieffel, {\it Non-commutative Tori - A case study of non-commutative differentiable manifolds}, Contemp. Math. 105 (1990) 191-211.
\bibitem[Rie2]{3.1} M.A. Rieffel, {\it  Critical points of Yang-Mills for noncommutative two-tori}, Journal of Differential Geometry, {\bf 31}, No 2, (1990) 535-546.
\bibitem[Sch]{9} L. B. Schweitzer, {\it Spectral invariance of dense subalgebras of operator algebras}, Internat. J. Math (1993) no. 2, 289-317.
\bibitem[Weg]{10} N.E Wegge-Olsen, {\it K-theory and $C^*$-algebras}, Oxford, 1993.
\end{thebibliography}
\end{document}